\newcommand{\la}{\lambda}
\def\cl{\mathcal{C}}
\def\Hom{\mathrm{Hom}}
\def\End{\mathrm{End}}
\def\Ker{\mathrm{Ker}\,}
\def\Res{\mathrm{Res}}
\def\Aut{\mathrm{Aut}}
\def\C{\ensuremath{\mathbbm{C}}}
\def\Z{\mathbbm{Z}}
\def\R{\mathbbm{R}}
\def\gl{\mathfrak{gl}}
\def\sl{\mathfrak{sl}}
\def\osp{\mathfrak{osp}}
\def\k{\mathbbm{k}}
\def\kt{\mathbbm{k}^{\times}}
\def\om{\omega}
\def\eps{\epsilon}
\def\into{\hookrightarrow}
\def\ii{\mathrm{i}}
\def\GL{\mathrm{GL}}
\newtheorem{theo}{Theorem}[section]
\newtheorem{prop}[theo]{Proposition}
\newtheorem{defi}[theo]{Definition}
\newtheorem{lemma}[theo]{Lemma}
\newtheorem{cor}[theo]{Corollary}
\def\mod{\ \mathrm{mod}\ }
\DeclareRobustCommand\widecheck[1]{{\mathpalette\@widecheck{#1}}}
\def\@widecheck#1#2{%
   \setbox\z@\hbox{\m@th$#1#2$}%
   \setbox\tw@\hbox{\m@th$#1%
      \widehat{%
         \vrule\@width\z@\@height\ht\z@
         \vrule\@height\z@\@width\wd\z@}$}%
   \dp\tw@-\ht\z@
   \@tempdima\ht\z@ \advance\@tempdima2\ht\tw@ \divide\@tempdima\thr@@
   \setbox\tw@\hbox{%
      \raise\@tempdima\hbox{\scalebox{1}[-1]{\lower\@tempdima\box\tw@}}}%
   {\ooalign{\box\tw@ \cr \box\z@}}}
\title{Group algebras of finite groups as Lie algebras}
\author{Ivan Marin}
\date{January 11th, 2008}
\begin{document}

\maketitle


\begin{center}
 Institut de Math\'ematiques de Jussieu\\ Université
Paris 7 \\ 175 rue du Chevaleret\\ F-75013 Paris \\
\end{center}

\bigskip
\bigskip

\noindent {\bf Abstract.} We consider the natural Lie algebra
structure on the (associative) group algebra of a finite
group $G$, and show that the Lie subalgebras associated to
natural involutive antiautomorphisms of this group
algebra are reductive ones. We give a decomposition
in simple factors of these Lie algebras, in terms of the ordinary representations of
$G$.
\medskip

\noindent {\bf MSC 2000 :} 20C15,17B99.

\section{Introduction}

Let $G$ be a \emph{finite} group and $\widehat{G}$ its set
of ordinary irreducible representations up to isomorphism.
Let $\k$ be a field of characteristic 0 such that each
ordinary representation of $G$ is defined over $\k$
(for instance any $\k$ containing the field
of cyclotomic numbers).

As a Lie algebra, the group algebra $\k G$ is reductive,
and the canonical decomposition of $\k G$ as a semisimple
unital algebra translates into a decomposition as a Lie algebra
$$
\k G = \bigoplus_{V \in \widehat{G}} \gl(V).
$$
In particular, $\k G$ is a reductive Lie algebra.
Likewise, the center of $\k G$ as a Lie algebra is the same
as its center as a group algebra, and is generated by the elements
$T_c$, where $T_c$ for $c$ a conjugacy class of $G$ is defined
as the sum of all elements of $c$. For any $g$ in $G$ we let $c(g)$ denote
its conjugacy class. There is a projection
$p : \k G \to Z(\k G)$ defined by
$$
p(x) = \frac{1}{\# G} \sum_{g \in G} gxg^{-1}
$$
whose kernel can also be described as the intersection of the kernels
of the linear forms $\delta_c$, defined for $c$ a conjugacy class
by $\delta_c(g) = 1$ if $g \in c$ and $\delta_c(g) = 0$ if $g \in G \setminus
c$. Since all $[g_1,g_2] = g_1 g_2 - g_1^{-1}(g_1g_2)g_1$,
for $g_1,g_2 \in G$, belong to these kernels, it follows that
$$
(\k G) ' = \bigcap \Ker \delta_c = \Ker p = \bigoplus_{V \in \widehat{G}}
\sl(V).
$$

\subsection{Definition of the Lie algebras $\mathcal{L}_{\alpha}(G)$}

The purpose of this note is to show that one can go
at least one step further in the understanding of the Lie-theoretical
aspects of this structure. Indeed, a classical way to get Lie subalgebras
of an associative algebra $A$, is to considerer involutive antiautomorphisms
$S$ of $A$. Then, $\{ a \in A \ | \ S(a) = -a \}$ is a Lie
subalgebra of $A$, which is spanned by the elements $x-S(x)$
for $x$ belonging to some basis of $A$.

In the case of group algebras, there is a canonical antiautomorphism
$g \mapsto g^{-1}$. More generally, if $\alpha : G \to \k^{\times}$
is a multiplicative character, then $S : g \mapsto \alpha(g) g^{-1}$
extends to an involutive antiautomorphism of $G$. We denote $\mathcal{L}_{\alpha}(G)$
the corresponding Lie algebra. According to the above remarks, it
can be defined as follows.

\begin{defi}
For $\alpha : G \to \k^{\times}$ a multiplicative character
of $G$, we denote $\mathcal{L}_{\alpha}(G)$ the
Lie subalgebra of $\k G$ spanned by the elements $g - \alpha(g) g^{-1}$
for $g \in G$.
\end{defi}

If $\alpha = 1$ is the trivial character, then $S$ is the antipode of the
natural Hopf algebra structure of $\k G$. We let $\mathcal{L}(G)
= \mathcal{L}_1(G)$. The correspondence $G \rightsquigarrow \mathcal{L}(G)$
defines a left exact functor from (finite) groups to Lie algebras.
Note that $\mathcal{L}(G) = \{ 0 \}$ iff $G$
is isomorphic to some $(\Z/2\Z)^r$. We will see that the structure
of $\mathcal{L}(G)$ is closely related to the structure of the group algebra $\R G$.

In general, $S$ is an antiautomorphism of $\k G$ as a symmetric
algebra, namely $t \circ S = t$, where $t$ is the usual trace
$t(g) = 1$ if $g = e$, $t(g) = 0$ if $g\neq e$, with $e$ the neutral
element of $G$. In particular
$\mathcal{L}_{\alpha}(G)$ is orthogonal to the subspace of
invariants $(\k G)^{S}$ with respect to the bilinear form $(a,b) \mapsto
t(ab)$.

Another way to see these Lie algebras is the following one.
For $g_1,g_2 \in G$, the formula
$(g_1,g_2) = \alpha(g_1) \delta_{g_1,g_2} = \alpha(g_2) \delta_{g_1,g_2}
$ defines a nondegenerate bilinear form on $\k G$. Embedding
$\k G$ in $\End(\k G)$ by left multiplication, one gets
that the adjoint of $g \in G$ with respect to $(\ , \ )$
is $\alpha(g) g^{-1}$. Hence $\mathcal{L}_{\alpha}(G)$ can be
viewed inside $\End(\k G)$ as the intersection of the
corresponding orthogonal Lie algebra and of the image of $\k G$.

\subsection{Main result}

Letting $p_{\alpha}$ linearly extending the natural map $g \mapsto
\frac{g - \alpha(g) g^{-1}}{2}$. It is readily checked that $p_{\alpha}$
is a projector on $\mathcal{L}_{\alpha}(G)$. Moreover one easily
gets, for instance by computing the trace of $p_{\alpha}$, that
$$
\dim \mathcal{L}_{\alpha}(G) = \frac{1}{2} \# \{ g \in G \ | \ g^2 \neq 1 \} 
+ \# \{ g \in G \ | \ g^2 = 1, \alpha(g) \neq 1 \}
$$

To such a character $\alpha$ one can associate another
Lie algebra. Let $V \in \widehat{G}$. If $\alpha \hookrightarrow
V \otimes V$ as a representation of $G$, by semisimplicity
this gives rise to a bilinear form $V \otimes V \to \k$, which is
nondegenerate by irreducibility of $V$. Let $\osp(V)$
be the Lie subalgebra of $\gl(V)$ leaving this form invariant.
It is easily checked that the component of $g - \alpha(g) g^{-1}$
on $\gl(V)$ for $g \in G$ belongs to $\osp(V)$.

On the contrary, if $\alpha$ does not inject in $V \otimes V$, this
is equivalent to saying that $V^* \otimes \alpha$ is not
isomorphic to $V$. It follows that $\gl(V) \oplus \gl(V^* \otimes \alpha)$
lies inside $\k G$. Let $\gl_{\alpha}(V)$
be the image of $\gl(V)$ under the map $x \mapsto (x,- ^t x)$.
Then the component of $g - \alpha(g) g^{-1}$
on $\gl(V)\oplus \gl(V^* \otimes \alpha)$ for $g \in G$ belongs to $\gl_{\alpha}(V)$.
Note that $\gl_{\alpha}(V) = \gl_{\alpha}(V^{*} \otimes \alpha)$.

It is then convenient to introduce the equivalence relation
on $\widehat{G}$ generated by $V \sim V^* \otimes \alpha$.
We denote $\widehat{G}_{\alpha} = \widehat{G}/\sim$ its set
of equivalence classes. Let $\widehat{G}_{\alpha}^{even}$ and
$\widehat{G}_{\alpha}^{odd}$ be the set of elements of $\widehat{G}_{\alpha}$
of cardinal 1 and 2, respectively. To any class $\{ V \}$ or
$\{ V, V^* \otimes \alpha \}$ we associated a well-defined
Lie subalgebra of $\k G$, $\osp_{\alpha}(V)$ or $\gl_{\alpha}(V)$.
Gluing all these together, we get the following Lie algebra.
\begin{defi}
For $\alpha : G \to \k^{\times}$ a multiplicative character
of $G$, then $\mathcal{M}_{\alpha}(G)$ is defined as the
Lie subalgebra
$$
\mathcal{M}_{\alpha}(G) = \left( \bigoplus_{V \in \widehat{G}_{\alpha}^{even}} \osp_{\alpha}(V) \right) \oplus
\left( \bigoplus_{V \in \widehat{G}_{\alpha}^{odd}} \gl_{\alpha}(V) \right)
$$
of $\k G = \bigoplus_{V \in \widehat{G}} \gl(V)$.
\end{defi}

By definition of the simple components $\osp_{\alpha}(V)$
and $\gl_{\alpha}(V)$ one has $\mathcal{L}_{\alpha}(V) \subset
\mathcal{M}_{\alpha}(V)$. It turns out that these
two Lie algebras are the same.

\begin{theo} For any finite group $G$ and multiplicative character
$G \to \k^{\times}$, one has $\mathcal{L}_{\alpha}(G) = \mathcal{M}_{\alpha}(G)$.
In particular, $\mathcal{L}_{\alpha}(G)$ is a reductive Lie
algebra. The center of $\mathcal{L}_{\alpha}(G)$ is generated by
the elements $T_c - \alpha(c) T_{c^{-1}}$ for $c \in \cl(G)$ such that
$\alpha(c) \neq 1$ or $\alpha(c) = 1$ and $c \neq c^{-1}$.
\end{theo}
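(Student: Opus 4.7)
The plan is to prove the equality $\mathcal{L}_{\alpha}(G) = \mathcal{M}_{\alpha}(G)$ via a dimension count — the inclusion $\mathcal{L}_{\alpha}(G) \subset \mathcal{M}_{\alpha}(G)$ having been observed just above — and then derive reductivity and the center from the explicit decomposition of $\mathcal{M}_{\alpha}(G)$.

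First, I would compute $\dim \mathcal{M}_{\alpha}(G)$ using the $\alpha$-twisted Frobenius--Schur indicator
$$\nu_{\alpha}(V) = \frac{1}{|G|} \sum_{g \in G} \alpha(g) \chi_V(g^2),$$
which takes the value $+1$ or $-1$ depending on whether the invariant form on $V \in \widehat{G}_{\alpha}^{even}$ is symmetric or antisymmetric, and $0$ when $V \not\cong V^* \otimes \alpha$. Writing $n_V = \dim V$, this gives uniformly $\dim \osp_{\alpha}(V) = \tfrac{1}{2}(n_V^2 - \nu_{\alpha}(V) n_V)$, while $\dim \gl_{\alpha}(V) = n_V^2$ for each class of $\widehat{G}_{\alpha}^{odd}$. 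Summing — taking care to count each class of $\widehat{G}_{\alpha}^{odd}$ once, though it contains two representations of equal dimension — one gets
$$\dim \mathcal{M}_{\alpha}(G) = \frac{1}{2}\Bigl(\sum_{V \in \widehat{G}} n_V^2 - \sum_{V \in \widehat{G}} \nu_{\alpha}(V) n_V\Bigr) = \frac{1}{2}\Bigl(|G| - \sum_{g \in G,\ g^2 = 1} \alpha(g)\Bigr),$$
where the last equality applies column orthogonality $\sum_V \chi_V(g^2) n_V = |G|\, \delta_{g^2, e}$ to the definition of $\nu_{\alpha}$. A routine rearrangement, using that $\alpha(g) \in \{\pm 1\}$ whenever $g^2 = 1$, shows this matches the dimension formula for $\mathcal{L}_{\alpha}(G)$ recorded in the introduction, and combined with the inclusion $\mathcal{L}_{\alpha}(G) \subset \mathcal{M}_{\alpha}(G)$ forces equality.

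Reductivity is then immediate, since $\mathcal{M}_{\alpha}(G)$ is a finite direct sum of reductive Lie algebras. The center decomposes accordingly as a sum of centers of the summands. Each $\gl_{\alpha}(V)$ with $V \in \widehat{G}_{\alpha}^{odd}$ contributes a one-dimensional piece, spanned inside $\k G$ by $e_V - e_{V^* \otimes \alpha}$, where $e_V = \frac{n_V}{|G|} \sum_g \chi_V(g^{-1}) g$ is the central idempotent of $V$; expanding this in the group basis and regrouping, one sees that $e_V - e_{V^* \otimes \alpha}$ lies in the span of the elements $T_c - \alpha(c) T_{c^{-1}}$. Conversely, a direct check shows that $T_c - \alpha(c) T_{c^{-1}}$ vanishes exactly when $c = c^{-1}$ and $\alpha(c) = 1$, that $T_c - \alpha(c) T_{c^{-1}}$ and $T_{c^{-1}} - \alpha(c^{-1}) T_c$ are proportional, and that the number of surviving independent elements matches $|\widehat{G}_{\alpha}^{odd}|$ via the twisted Brauer-type pairing between self-inverse conjugacy classes and self-dual irreducibles — yielding the description of the center.

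The main obstacle, I expect, is assembling the dimension sum in the second step coherently across the even/odd dichotomy, and correctly identifying the twisted Frobenius--Schur bookkeeping with the elementary count $\sum_{g^2=1} \alpha(g)$ via column orthogonality. Once that identity is in place, the theorem reduces to pure structure theory applied to $\mathcal{M}_{\alpha}(G)$.
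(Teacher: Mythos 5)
Your overall route is the paper's own first proof: compare dimensions via a weighted Frobenius--Schur indicator, then identify the center by matching the span of the elements $T_c-\alpha(c)T_{c^{-1}}$ against the centers of the $\gl_{\alpha}$-summands by a counting argument. However, one step is wrong as written: the indicator must be $\mathcal{F}_{\alpha}(V)=\frac{1}{\# G}\sum_{g}\chi_V(g^2)\,\overline{\alpha(g)}$ (equivalently, with $\alpha(g^{-1})$), not $\frac{1}{\# G}\sum_g \alpha(g)\chi_V(g^2)$. Your $\nu_{\alpha}$ computes the multiplicity of $\alpha^{-1}$ in $S^2V$ minus that in $\Lambda^2V$, so it detects $V\simeq \alpha^{-1}\otimes V^*$ rather than $V\simeq\alpha\otimes V^*$, and your claimed dichotomy fails whenever $\alpha^2\neq 1$. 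Concretely, take $G=\Z/8\Z=\langle x\rangle$, $\alpha(x)=\zeta^2$ with $\zeta$ a primitive $8$th root of unity, and $V$ the character $x\mapsto\zeta$: then $V\simeq\alpha\otimes V^*$, so $V\in\widehat{G}_{\alpha}^{even}$ and $\osp_{\alpha}(V)=0$, yet $\nu_{\alpha}(V)=\frac18\sum_k(-1)^k=0$, making your formula $\dim\osp_{\alpha}(V)=\frac12(n_V^2-\nu_{\alpha}(V)n_V)$ output $1/2$; likewise $\nu_{\alpha}$ need not vanish on odd classes. Your final identity $\dim\mathcal{M}_{\alpha}(G)=\frac12\bigl(\# G-\sum_{g^2=1}\alpha(g)\bigr)$ is nevertheless true, because after column orthogonality only elements with $g^2=1$ survive and there $\alpha(g)=\pm1=\overline{\alpha(g)}$; but the derivation passes through false intermediate statements, and the repair is simply to conjugate $\alpha$, which is exactly proposition \ref{proptwschur} of the paper.

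The second gap is in the center computation: you invoke, without proof, the equality between the number of classes $c$ with $c=c^{-1}$, $\alpha(c)=1$ and the number of $V\in\widehat{G}$ with $V\simeq\alpha\otimes V^*$ (your ``twisted Brauer-type pairing''). For $\alpha\neq 1$ this is not the classical real-classes statement, and it is precisely the lemma the paper proves: one introduces the involution $S(\varphi)(g)=\alpha(g)\varphi(g^{-1})$ on class functions and computes $\dim\Ker(S-1)-\dim\Ker(S+1)$ in the two natural bases (irreducible characters and class indicator functions); an equivalent Brauer permutation-lemma argument applied to $\chi\mapsto\alpha\overline{\chi}$ would also do, but some such argument must be supplied. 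Your observation that $e_V-e_{V^*\otimes\alpha}$ lies in the span of the $T_c-\alpha(c)T_{c^{-1}}$ is correct and plays the same role as the paper's comparison of the two bases of $\Ker(S+1)$. With these two repairs your argument coincides with the paper's character-theoretic proof (the paper also gives a second, harmonic-analytic proof which avoids any counting).
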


We give two proofs of this result. The first proof (section 2) uses character
theory and a weighted version of the Frobenius-Schur indicator
in order to show that these two algebras have the same dimension.
The second one (section 3) is more conceptual, in the sense that it proves
the equality of these two objects without any counting argument,
and belongs naturally to the setting of harmonic analysis. Indeed,
when $\k = \C$, the Lie algebra $\mathcal{L}_{\alpha}(G)$ can be identified
with a set of discontinuous measures, defined for any locally compact group $G$,
whereas $\mathcal{M}_{\alpha}(G)$ admits a natural
generalization for compact groups.

One may consider a slightly more general setting, where
the antiautomorphism $S$ has the form $g \mapsto \alpha(g) \tau(g)^{-1}$,
and $\tau$ is an involutive automorphism of $G$.
This defines a Lie subalgebra $\mathcal{L}_{\alpha,\tau}(G)$,
spanned by the elements $g - \alpha(g) \tau(g^{-1})$.
Similarly,
the corresponding permutation of $\widehat{G}$, defined at the level
of characters by $\chi \mapsto \alpha \overline{\chi} \circ \tau$,
has order two, and we can define $\widehat{G}_{\alpha,\tau}^{even}$
and $\widehat{G}_{\alpha,\tau}^{odd}$ accordingly. 
If $\chi$ is an irreducible character of $G$ which corresponds to
$\rho : G \to \GL(V)$,
then $\chi = \alpha \overline{\chi} \circ \tau$
is equivalent to the existence of
a nondegenerate bilinear form $\phi$ on $V$ such that
$\phi(g.x,\tau(g).y) = \alpha(g) \phi(x,y)$, in which case
we get natural Lie subalgebras $\osp_{\alpha,\tau}(V) \subset \gl(V)$
and $\mathcal{M}_{\alpha,\tau}(G) \subset \k G$. Like before,
we check that
$\mathcal{M}_{\alpha,\tau}(G) \supset  \mathcal{L}_{\alpha,\tau}(G)$.
The theorem above can be generalized in $\mathcal{M}_{\alpha,\tau}(G) =  \mathcal{L}_{\alpha,\tau}(G)$.
The second proof we give of the theorem proves this generalization (see corollary \ref{corharm}).

\subsection{Connections with classical and other topics.}

Our primary interest in these Lie algebras was that, when $G$ is finite
group generated by reflections (for example a finite Coxeter group),
with $\eps : G \to \{ \pm 1 \}$ the sign character, then $\mathcal{L}_{\eps}(G)$
contains the Lie subalgebra of $\k G$ generated by the reflections.
It turns out that this latter Lie algebra is closely connected to
the Zariski-closure of the image of the generalized braid group
associated to $G$ inside the corresponding Hecke algebra (see \cite{HECKINF,LIETRANSP,INFIWAHECK}).
We list here a number of other connections.

\subsubsection{Frobenius-Schur theory}

Recall that, if $V \in \widehat{G}$, then the Frobenius-Schur
indicator $\mathcal{F}(V)$ of $V$ is 1 if $V$ can be realized over $\R$, $-1$ if $V$
cannot be realised over $\R$ but has real-valued character, and
$0$ otherwise. It is then said that $V$ has real, quaternionic
or complex type (see e.g. \cite{SERRE}). If $\chi_V$ denotes the character of $V$, then
$$
\mathcal{F}(V) = \frac{1}{\# G} \sum_{g \in G} \chi_V(g^2)
$$
In the first two cases, $1 \hookrightarrow V \otimes V$. In the
first case the bilinear form that we defines is symmetric,
in the second case it is symplectic. In the last case
$1 \not\hookrightarrow V \otimes V$.

It follows that, if $\alpha$ is the trivial character, then the elements
of $\widehat{G}$
whose classes belong to $\widehat{G}_{\alpha}^{odd}$
correspond to the representations
of complex type, whereas the classes belonging to
$\widehat{G}_{\alpha}^{even}$ correspond to representations
of real or quaternionic type. In particular our theorem for $\alpha = 1$
is a Lie-theoretic interpretation of the Frobenius-Schur
theory.

\subsubsection{Real versions of Clifford theory}

Here we assume $\alpha \neq 1$. Then its kernel $H$ is
a normal subgroup of $G$ with cyclic quotient,
and its ordinary representations can be deduced from
those of $G$ through Clifford theory. It is easily noticed, using
character theory and the Frobenius-Schur indicator, that
the real
types of the representations of $G$ and $H$ are intimately related.
Our context provides a somewhat more conceptual explanation of this
phenomenon, by the following elementary fact.

\begin{prop} If $H = \Ker \alpha$,
then $\mathcal{L}(H) = \mathcal{L}(G) \cap \mathcal{L}_{\alpha}(G)$.
\end{prop}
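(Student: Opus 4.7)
The plan is to exploit the characterization of $\mathcal{L}_\beta(G)$ as the $(-1)$-eigenspace of the involutive antiautomorphism $S_\beta : g \mapsto \beta(g) g^{-1}$, for $\beta = 1$ and $\beta = \alpha$, and compare supports.

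The inclusion $\mathcal{L}(H) \subseteq \mathcal{L}(G) \cap \mathcal{L}_\alpha(G)$ is immediate: since $\alpha(h) = 1$ for every $h \in H$, each generator $h - h^{-1}$ of $\mathcal{L}(H)$ coincides with $h - \alpha(h) h^{-1}$, and therefore lies in both $\mathcal{L}(G)$ and $\mathcal{L}_\alpha(G)$ (viewed inside $\k G$).

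For the reverse inclusion, I would write $x = \sum_{g \in G} c_g\, g \in \mathcal{L}(G) \cap \mathcal{L}_\alpha(G)$. The condition $S_1(x) = -x$ yields $c_{g^{-1}} = -c_g$ for every $g$, while $S_\alpha(x) = -x$ yields $\alpha(g^{-1}) c_{g^{-1}} = -c_g$. Combining these two gives $c_g (1 - \alpha(g)^{-1}) = 0$, hence $c_g = 0$ whenever $\alpha(g) \neq 1$. Consequently $x$ is supported on $H$, i.e.\ $x \in \k H$.

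It then remains to observe that the restriction of $S_1$ to $\k H$ is precisely the antipode of $\k H$, so the condition $S_1(x) = -x$ puts $x$ in the $(-1)$-eigenspace of the antipode of $\k H$, which is $\mathcal{L}(H)$ (one recovers this by pairing $g$ with $g^{-1}$ in the support: the involutions in $H$ contribute nothing because $c_g = -c_g$ in characteristic $0$, and the remaining terms assemble into sums of $c_g(g - g^{-1})$ with $g \in H$). The argument is entirely elementary; there is no real obstacle, only the care needed to keep track of the two antiautomorphism conditions simultaneously.
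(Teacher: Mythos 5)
Your proof is correct and follows essentially the same route as the paper's: both establish the easy inclusion from $\alpha|_H = 1$, then compare coefficients using the two eigenspace conditions $S_1(x) = -x$ and $S_\alpha(x) = -x$ to force the support of $x$ into $H$, and conclude via $\k H \cap \mathcal{L}(G) = \mathcal{L}(H)$. Your write-up is merely a bit more explicit in the last step (the pairing of $g$ with $g^{-1}$ and the vanishing at involutions), which the paper leaves implicit.
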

\begin{proof}
For $h \in H$, we have $h - h^{-1} = h - \alpha(h) h^{-1}$
hence $\mathcal{L}(H) \subset \mathcal{L}(G) \cap \mathcal{L}_{\alpha}(G)$.
If $x = \sum \la_g g \in \mathcal{L}(G) \cap \mathcal{L}_{\alpha}(G)$,
then $\la_g = -\la_{g^{-1}}$ and $\la_g = - \la_{g^{-1}}$
for all $g \in G$, hence $\la_g = 0$ or $\alpha(g) = 1$,
that is $g \not\in H \Rightarrow \la_g = 0$.
Hence $x \in \k H \cap \mathcal{L}(G) = \mathcal{L}(H)$.

\end{proof}

\subsubsection{The Kawanaka-Matsuyama indicator}

In \cite{KAWA} was introduced an indicator $c_{\tau}(\chi)$
for $\chi$ a character of $G$ and $\tau \in \Aut(G)$
with $\tau^2 = 1$, defined by
$$
c_{\tau}(\chi) = \frac{1}{\# G} \sum_{g \in G} \chi(g \tau(g))
$$
When $\tau = 1$ we have $c_{\tau} = \mathcal{F}$, hence $c_{\tau}$
is a twisted version of the Frobenius-Schur indicator. If $\chi$
is irreducible, then
$c_{\tau}(\chi) = \{ -1 , 0 , 1\}$, with $c_{\tau}(\chi) \geq 0$
iff $\chi(\tau(g)) = \chi(g^{-1})$, and $c_{\tau}(\chi) = 1$
if and only if the corresponding representation admits
a model $R : G \to \GL_N(\C)$ such that $R(\tau(g)) = \overline{R(g)}$
for all $g \in G$.

Here we introduce a weighting $\mathcal{F}_{\alpha}$
of the Frobenius-Schur indicator, which
is connected with our
Lie algebra structures:
$$
\mathcal{F}_{\alpha}(\chi) = \frac{1}{\# G} \sum_{g \in G} \chi(g^2)
\overline{\alpha(g)}.
$$
It is easily checked that,
if $L = G \rtimes <\tau >$ and $\eps : L \to \{ \pm 1 \}$
has kernel $G$, then
$2 \mathcal{F}_{\eps}(\chi) = \mathcal{F}_1(\Res_G \chi) - c_{\tau}(
\Res_G \chi)$.
Moreover, by Clifford theory, we know that if $\Res_G \chi$
is not irreducible, then $\Res_G \chi = \chi_+ + \chi_-$
with $\chi_{\pm}$ irreducible and $\chi_{\pm} \circ \tau
= \chi_{\mp}$. It follows that $\chi_{\pm}(g \tau(g)) = \chi_{\pm}(\tau(g) g)=
\chi_{\mp}(g \tau(g))$ hence $c_{\tau}(\chi_{\pm}) = c_{\tau}(\chi_{\mp})$.
Likewise,
$$\mathcal{F}(\chi_{\pm}) = \frac{1}{\# G}\sum\chi_{\pm}(g^2) 
\frac{1}{\# G}\sum\chi_{\mp}(\tau(g)^2) = \frac{1}{\# G}\sum\chi_{\mp}(g^2)=
\mathcal{F}(\chi_{\mp}).$$
It follows that 
$\mathcal{F}_{\eps}(\chi) =
\mathcal{F}_1(\chi_{\pm}) - c_{\tau}(
\chi_{\pm})$, hence these twisted Frobenius-Schur indicators
are closely related to our weighted ones.

Finally, note that our twisted Lie algebras $\mathcal{L}_{1,\tau}(G)$
and $\mathcal{M}_{1,\tau}(G)$ provide a Lie-theoretic interpretation
of the Kawanaka-Matsuyama indicator (see remark 3.18 in \cite{KAWA}).
 
\subsubsection{Bessel functions}

Assume that $\k$ is a complete topological field.
The structure of $\mathcal{M}_{\alpha}(G)$ and
the isomorphism $\mathcal{M}_{\alpha}(G) \simeq \mathcal{L}_{\alpha}(G)$
provides a description and a decomposition of the Lie group $\exp \mathcal{L}_{\alpha}(G)$.
On the other hand, we remark here that a direct exponentiation
of $\mathcal{L}_{\alpha}(G)$, when $\k = \C$, involves Bessel's $J$
function.

Recall that $J_m(z)$ for $m \in \Z$ can
be defined by
$$
J_m(z) = \frac{1}{2 \pi} \int_{-\pi}^{\pi} e^{\ii z \sin t} e^{-\ii m t} dt
= \sum_{k=0}^{\infty} \frac{(-1)^k}{\Gamma(k+m+1) k !} \left(
\frac{z}{2} \right)^{2k+m}
$$
Also, for all $z,q \in \C$ with $|q| = 1$, we have by Fourier expansion
$$
\exp \frac{z}{2} (q - q^{-1}) = \sum_{m = -\infty}^{+ \infty}
J_m(z) q^m
$$
In section 3, we generalize the construction of $\mathcal{L}_{\alpha}(G)$
to the case of a locally compact group. In this setting,
$\mathcal{L}_{\alpha}(G)$ is a subspace of the Banach algebra of
totally discontinuous measures on $G$, and $x - \alpha(x)x^{-1}$
corresponds to the measure $\delta_x - \alpha(x)\delta_{x^{-1}}$, with
$\delta_x$ the Dirac measure. When $G$ is infinite
cyclic generated by $x$, $\alpha : G \to \C^{\times}$ sending $x$ to $\omega$
of modulus 1, the above formula translates as
$$
\exp \frac{z}{2} (\delta_{x} - \om \delta_{x^{-1}}) = \sum_{m = -\infty}^{+ \infty}
J_m(z \varphi) \varphi^{-m}\delta_{x^m}
$$
where $\varphi \in \C^{\times}$ with $\varphi^2 = \omega$. Note that
$J_m(z \varphi) \varphi^{-m}$ is well-defined, since $J_m(-z) = (-1)^m J_m(z)$.
Assume that $\alpha(G)$ is finite, $H < \Ker \alpha$ and $Q = G/H \simeq \mu_N(\C)$.
Then the canonical morphism $\mathcal{L}_{\alpha}(G) \to \mathcal{L}_{\alpha}(Q)$
is continuous. Letting $y$ denote the image of $x$, we thus get the
formula
$$
\exp(\frac{z}{2} (y-\om y^{-1}) = \sum_{r=0}^{N-1}
y^r \left( \sum_{m \equiv r \mod N} J_m(z \varphi) \varphi^{-m} \right).
$$
which describes the commutative Lie group $\exp \mathcal{L}_{\alpha}(Q)$
inside $(\C Q)^{\times}$ when $Q$ is a finite cyclic group.

\medskip
\noindent {\bf Acknowledgements.} It is my pleasure to thank J. Vargas
and O. Glass for useful discussions and references about harmonic
analysis.

\section{First proof, through character theory}

\subsection{Weighted Frobenius-Schur indicator}

We define the weighted Frobenius-Schur indicator associated to $\alpha$
as the additive function on the representation ring of $G$ defined
by
$$
\mathcal{F}_{\alpha}(V) = \frac{1}{\# G} \sum_{g \in G} \chi_V(g^2)
\overline{\alpha(g)}.
$$
If $\alpha = 1$ one recovers the usual Frobenius-Schur indicator. Its
weighted version has similar properties. Recall for instance that
the number of involutions in a finite group $G$ is $\sum_{V \in \widehat{G}} \mathcal{F}_{1} (V) \dim V$.
Here we introduce
$$
\mathcal{I}_{\alpha}(G) = \{ g \in G  \ | \ g = g^{-1},\ \ \alpha(g) = 1 \}
\ \ \ 
\mathcal{J}_{\alpha}(G) = \{ g \in G  \ | \ g = g^{-1},\ \ \alpha(g) = -1 \}
$$
and we denote $\mathrm{reg}$ the regular representation of $G$.

\begin{prop} \label{proptwschur} If $V \in \widehat{G}$ then $\mathcal{F}_{\alpha}(V) \in \{ -1,0,1 \}$.
In these three cases the location of $\alpha$ in the decomposition of
$V \otimes V$ is as follows
$$
\begin{array}{|c||c|c|c|}
\hline
\mathcal{F}_{\alpha}(V) & -1 & 1 & 0 \\
\hline
\Hom(\alpha,V\otimes V) & \alpha \into \Lambda^2 V  &
\alpha \into S^2 V & \alpha \not\into V \otimes V \\
\hline
\end{array}
$$
Moreover,
$$
\mathcal{F}_{\alpha}(\mathrm{reg}) = \sum_{V \in \widehat{G}} \mathcal{F}_{\alpha}(V) \dim V = \mathcal{I}_{\alpha}(G)
- \mathcal{J}_{\alpha}(G)
$$
\end{prop}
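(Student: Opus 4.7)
The plan is to express $\mathcal{F}_\alpha(V)$ as a signed count of occurrences of $\alpha$ inside $V\otimes V$, and then to evaluate $\mathcal{F}_\alpha$ on the regular representation in two different ways.

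First I would use the elementary identity $\chi_V(g^2) = \chi_{S^2 V}(g) - \chi_{\Lambda^2 V}(g)$, obtained by diagonalizing $\rho(g)$ and comparing with the trace of $\rho(g)\otimes \rho(g)$ on the symmetric and antisymmetric parts. Substituting into the definition gives
$$
\mathcal{F}_{\alpha}(V) = \frac{1}{\#G}\sum_{g\in G} \bigl(\chi_{S^2 V}(g) - \chi_{\Lambda^2 V}(g)\bigr)\overline{\alpha(g)}
= \langle \alpha, \chi_{S^2 V}\rangle_G - \langle \alpha, \chi_{\Lambda^2 V}\rangle_G,
$$
so $\mathcal{F}_\alpha(V)$ equals $\dim\Hom_G(\alpha, S^2 V) - \dim\Hom_G(\alpha, \Lambda^2 V)$. (The values of $\alpha$ are roots of unity in $\k$, and $\overline{\alpha(g)}=\alpha(g)^{-1}=\alpha(g^{-1})$; this is the only point where one must check that the conventions of the complex inner product carry over to our $\k$.)

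Next, for $V$ irreducible, Frobenius reciprocity and Schur's lemma give
$$
\dim\Hom_G(\alpha, V\otimes V) = \dim\Hom_G(V^*\otimes \alpha, V) = \begin{cases}1 & \text{if }V\simeq V^*\otimes\alpha,\\ 0 & \text{otherwise.}\end{cases}
$$
Since $V\otimes V = S^2 V \oplus \Lambda^2 V$, this total multiplicity of $\alpha$ is either $0$, or $1$ appearing in exactly one of the two summands. The three cases in the table then follow immediately, and in particular $\mathcal{F}_\alpha(V) \in \{-1,0,1\}$.

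For the regular representation, additivity of $\mathcal{F}_\alpha$ applied to $\mathrm{reg} = \bigoplus_{V\in \widehat G} V^{\dim V}$ yields the first equality $\mathcal{F}_\alpha(\mathrm{reg}) = \sum_V \mathcal{F}_\alpha(V)\dim V$. To compute the same quantity directly, I would use $\chi_{\mathrm{reg}}(h) = \#G \cdot \delta_{h,e}$, so that
$$
\mathcal{F}_\alpha(\mathrm{reg}) = \sum_{g\in G,\, g^2 = e} \overline{\alpha(g)}.
$$
For each such $g$ one has $\alpha(g)^2 = \alpha(g^2) = 1$, hence $\alpha(g)\in\{\pm 1\}$ and $\overline{\alpha(g)} = \alpha(g)$. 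Splitting the sum according to the sign of $\alpha(g)$ gives exactly $\#\mathcal{I}_\alpha(G) - \#\mathcal{J}_\alpha(G)$.

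There is no real obstacle here; the argument is a weighted adaptation of the classical Frobenius--Schur count. The only subtle point is the identification $\overline{\alpha(g)}=\alpha(g^{-1})$ in a general field $\k$, which justifies interpreting $\langle\alpha,\cdot\rangle_G$ as the usual multiplicity pairing.
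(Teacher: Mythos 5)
Your proof is correct and follows essentially the same route as the paper: the identity $\chi_V(g^2)=\chi_{S^2V}(g)-\chi_{\Lambda^2V}(g)$ (the difference of the two classical formulas the paper quotes), Schur's lemma applied to $\Hom(\alpha,V\otimes V)\simeq\Hom(V^*\otimes\alpha,V)$ for the trichotomy, and the direct evaluation of $\mathcal{F}_\alpha(\mathrm{reg})$ via $\chi_{\mathrm{reg}}$ supported at the identity. Your extra remark on $\overline{\alpha(g)}=\alpha(g^{-1})$ over a general field $\k$ is a harmless clarification the paper leaves implicit.
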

\begin{proof}
The classical formulas $\chi_{S^2 V} (g) = \frac{1}{2} (\chi_V(g)^2 +
\chi_V(g^2))$ and $\chi_{\Lambda^2 V} (g) = \frac{1}{2} (\chi_V(g)^2 -
\chi_V(g^2))$ imply that $\mathcal{F}_{\alpha}(V) = (\chi_{S^2 V}| \alpha)
- (\chi_{\Lambda^2 V} | \alpha)$, where $(\ | \ )$ denotes the
usual scalar product on the class functions on $G$. If $V$ is irreducible, then
$V^* \otimes \alpha$ is also irreducible and $\Hom(\alpha,V \otimes V)
\simeq  \Hom(V^* \otimes \alpha,V)$ has dimension at most 1 by the Schur
lemma. This establishes the first part of the lemma.

One has $\chi_{\mathrm{reg}}(g^2) = 0$ if $g \neq g^{-1}$. Otherwise
$g^2 = e$ hence $\alpha(g)^2 = 1$ and $\alpha(g) \in \{-1,1\}$. It follows
that $\mathcal{F}_{\alpha}(\mathrm{reg}) = \mathcal{I}_{\alpha}(g) -
\mathcal{J}_{\alpha}(g)$.
\end{proof}

\subsection{Identification of the Lie algebras.}

Since $\sum_{V \in \widehat{G}} \dim \gl(V) = \# G$ one gets
$$
\# G - 2 \dim \mathcal{M}_{\alpha}(G) = \sum_{\alpha \into S^2 V} \dim V
- \sum_{\alpha \into \Lambda^2 V} \dim V = \sum_{V \in \widehat{G}}
\mathcal{F}_{\alpha}(V) \dim V = \mathcal{F}_{\alpha}(\mathrm{reg})
$$
On the other hand, $\mathcal{L}_{\alpha}(G)$ is spanned
by the elements $g - \alpha(g) g^{-1}$ such that $g \neq g^{-1}$
or $g = g^{-1}$ but $\alpha(g) \neq 1$. It follows that
$\# G - 2 \dim \mathcal{L}_{\alpha}$ equals
$$
\begin{array}{ll}
 & \# \{ g \in G \ | \ g = g^{-1} \}
- 2 \# \{ g \in G \ | \ g = g^{-1}, \ \alpha(g) \neq 1 \} \\
= & \# \{g \in G \ | \ g=g^{-1}, \ \alpha(g) = 1 \} - 
\# \{ g \in G \ | \ g = g^{-1}, \ \alpha(g) \neq 1 \} \\
= & \# \{g \in G \ | \ g=g^{-1}, \ \alpha(g) = 1 \} - 
\# \{ g \in G \ | \ g = g^{-1}, \ \alpha(g) = - 1 \}
\end{array}
$$
since $g = g^{-1}$ implies $\alpha(g) \in \{ -1,1\}$. It follows
that $\# G - 2 \dim \mathcal{L}_{\alpha}(G) = \mathcal{I}_{\alpha}(G)
- \mathcal{J}_{\alpha}(G) = \# G - 2 \dim \mathcal{M}_{\alpha}(G)$
by proposition \ref{proptwschur}, hence $\mathcal{L}_{\alpha}(G) =  
\mathcal{M}_{\alpha}(G)$.

\subsection{Description of the center.}

From the identification above one gets that the center
of $\mathcal{L}_{\alpha}(G)$ as dimension $\frac{1}{2} \{ V \in \widehat{G} \ | \ 
V \not\simeq \alpha \otimes V^* \}$. Let $\cl(G)$ denote the set
of conjugacy classes of $G$, and recall the notation $T_c$ for $c \in
\cl(G)$ from \S 1. We consider the elements
$T_c - \alpha(c) T_{c^{-1}}$ for $c \in \cl(G)$ such that $\alpha(c) \neq 1$
or $\alpha(c) = 1$ and $c \neq c^{-1}$. They span a subspace of $Z(\k G)$
of dimension
$$
\frac{1}{2} \# \cl(G) - \frac{1}{2} \# \{ c \in \cl(G) \ | \ \alpha(c) = 1,
\ c = c^{-1} \}
$$
\begin{lemma} Let $G$ be a finite group, and $\alpha : G \to \kt$ be
a multiplicative character. Then 
$$
\# \{ c \in \cl(G) \ | \ \alpha(c) = 1,
\ c = c^{-1} \} = \{ V \in \widehat{G} \ | \ V \simeq \alpha \otimes V^* \}
$$
\end{lemma}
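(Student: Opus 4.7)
The strategy is a trace computation in two bases, in the spirit of Frobenius-Schur. Let $\mathrm{CF}(G)$ denote the space of $\k$-valued class functions on $G$, and introduce the $\alpha$-twisted inversion
\[
\sigma\colon \mathrm{CF}(G)\to\mathrm{CF}(G), \qquad \sigma(f)(g) \assigne \alpha(g)\,f(g^{-1}).
\]
Since $\alpha(g)\alpha(g^{-1})=1$ one has $\sigma^2 = \Id$, and since $\alpha$ is a class function and inversion commutes with conjugation, $\sigma$ preserves $\mathrm{CF}(G)$. The plan is to compute $\mathrm{tr}(\sigma)$ in two natural bases and equate the results.

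In the basis $\{\delta_c\}_{c \in \cl(G)}$ of characteristic functions of conjugacy classes, a direct computation gives $\sigma(\delta_c) = \alpha(c)^{-1}\delta_{c^{-1}}$, so only the self-inverse classes $c = c^{-1}$ contribute to the diagonal. For such a class, $g$ is conjugate to $g^{-1}$, which forces $\alpha(c)^2 = 1$ and hence $\alpha(c) \in \{\pm 1\}$, whence $\mathrm{tr}(\sigma) = \sum_{c=c^{-1}} \alpha(c)$. In the basis $\{\chi_V\}_{V\in\widehat G}$ of irreducible characters, the duality $\chi_V(g^{-1}) = \chi_{V^*}(g)$ yields $\sigma(\chi_V) = \chi_{\alpha\otimes V^*}$, so $\sigma$ acts as the permutation $V \mapsto \alpha \otimes V^*$ of this basis; its trace therefore counts fixed points, namely $\#\{V\in\widehat G : V \simeq \alpha \otimes V^*\}$.

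Equating the two expressions yields the identity. I do not foresee any real obstacle: the argument reduces to the bookkeeping formulas $\sigma(\delta_c) = \alpha(c)^{-1}\delta_{c^{-1}}$ and $\sigma(\chi_V) = \chi_{\alpha\otimes V^*}$, together with the Schur-type observation that $\alpha$ is forced to be $\pm 1$-valued on self-inverse classes. Strictly speaking, the class-side trace is $\#\{c : c=c^{-1},\ \alpha(c)=1\} - \#\{c : c=c^{-1},\ \alpha(c)=-1\}$; the second, negative term vanishes precisely when $\alpha$ never takes value $-1$ on a self-inverse class, e.g.\ when $\alpha = 1$ or when $\alpha$ has odd order, and must otherwise be carried along as a signed correction on the right-hand side.
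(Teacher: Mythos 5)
Your method is the same as the paper's: your $\sigma$ is precisely the involution $S(\varphi)(g) = \alpha(g)\varphi(g^{-1})$ on class functions used there, and computing its trace in the two natural bases is equivalent to the paper's comparison of $\dim\Ker(S-1) - \dim\Ker(S+1)$ in those bases, since that difference \emph{is} the trace of an involution. So as a strategy there is nothing new; what is genuinely valuable is your closing caveat, which is not pedantry but a needed correction. The class-basis trace is $\#\{c : c=c^{-1},\ \alpha(c)=1\} - \#\{c : c=c^{-1},\ \alpha(c)=-1\}$, and the negative term can be nonzero: for $G = \Z/2\Z$ and $\alpha$ the nontrivial character, the left-hand side of the lemma as printed equals $1$ while no irreducible $V$ satisfies $V \simeq \alpha \otimes V^*$, so the stated equality fails. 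The paper's own proof slips at exactly the point you flag: in its claimed basis of $\Ker(S-1)$ the vectors $\varphi_c + \alpha\varphi_{c^{-1}}$ vanish when $c = c^{-1}$ and $\alpha(c) = -1$ (those $\varphi_c$ actually lie in $\Ker(S+1)$), so what the two-bases comparison really proves is your signed identity, not the unsigned one. Your version is also what the theorem needs: the elements $T_c - \alpha(c)T_{c^{-1}}$ span a space of dimension $\frac{1}{2}\#\cl(G) - \frac{1}{2}\bigl(\#\{c : c=c^{-1},\ \alpha(c)=1\} - \#\{c : c=c^{-1},\ \alpha(c)=-1\}\bigr)$ (the count preceding the lemma omits the same classes), and with your identity this equals $\frac{1}{2}\#\{V \in \widehat{G} : V \not\simeq \alpha\otimes V^*\}$, so the description of the center in the main theorem is unaffected. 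In short: your proof is correct, follows the paper's route, and the signed correction you carry along should in fact be incorporated into the statement of the lemma.
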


\begin{proof}

The space of central functions over $G$ with values in $\k$ has two
natural basis, given by the irreducible characters on the
one hand, and by the characteristic functions $\varphi_c$ for
$c \in \cl(G)$ on the other hand. We define an involutory
endomorphism $S$ of this space by $S(\varphi)(g) = \alpha(g) \varphi(g^{-1})$.

The functions $\chi_V$ for $V \in \widehat{G}, \ \chi_V = \alpha \otimes \overline{\chi_V}$
and the functions $\chi_V + \alpha \overline{\chi_V}$ for $\chi_V \neq \alpha \otimes \overline{\chi_V}$
form a basis of $\Ker(S-1)$. Another basis is given by the
$\varphi_c$ for $\alpha(c) = 1$, $c = c^{-1}$ and the $\varphi_c + \alpha
\varphi_{c^{-1}}$ for $\alpha(c) \neq 1$ or $c \neq c^{-1}$.

In the same way, two basis of $\Ker(S+1)$ are given by
$\{ \chi_V - \alpha \overline{\chi_V} \ |\  V \not\simeq \alpha \otimes V^* \}$
and $\{ \varphi_c - \alpha \varphi_{c^{-1}} \ | \ \alpha(c) \neq 1 \mbox{ or }
c \neq c^{-1} \}$. It follows that
$$
\begin{array}{ll}
 & \# \{ c \in \cl(G) | \alpha(c) =1, c = c^{-1} \} \\
= & \dim \Ker(S-1) - \dim \Ker(S+1) \\
 = & \# \{ V \in \widehat{G} \ | \ V \simeq \alpha \otimes V^* \}
\end{array}
$$
\end{proof}

Since $\# \cl(G) = \# \widehat{G}$ this lemma concludes the proof of the theorem.

\section{Second proof, through harmonic analysis}

We will use \cite{HR} as our main reference here. In order
to prove the theorem, since we already know $\mathcal{L}_{\alpha,\tau}(G)
\subset \mathcal{M}_{\alpha,\tau}(G)$, we can assume $\k = \C$
without loss of generality.

\subsection{Preliminaries on borelian measures}

\subsubsection{Weak-* topology}

Let $X$ be a locally compact topological space. We let $C_0(X)$
be the $\C$-vector space of functions on $G$ that tend to 0 at infinity,
and $\mathbf{M}(X)$ the $\C$-vector space of bounded complex borelian
measures on $X$, that we identify to bounded linear forms on $C_0(X)$.

The space $\mathbf{M}(X)$ admits two topologies which are useful here, the
norm topology defined by the operator norm, and the weak-* topology
for which a basis of open sets is given by
$$
U(\Phi ; f_1,\dots,f_k,\eps) = \{ \Psi \in \mathbf{M}(X) \ | \ 
\forall i \in [1,k] \ | \Psi(f_i) - \Phi(f_i) | < \eps \}
$$
where $\Phi \in \mathbf{M}(X)$, $f_1,\dots,f_k \in C_0(X)$ and
$\eps > 0$.

We will make repeated use of the following basic lemma.

\begin{lemma}\label{lemweak} Let $E$ be a subspace of $\mathbf{M}(X)$ such that,
for all $\varphi \in C_0(X)$, one has
$$
\forall \mu \in E \ \ \mu(f) = 0 \Rightarrow f = 0
$$
Then, for all $\la \in \mathbf{M}(X)$ and $f_1,\dots,f_n \in C_0(X)$,
there exists $\mu \in E$ such that $\forall i \in [1,n]$ one has
$\mu(f_i) = \la(f_i)$. In particular $\forall \eps > 0 \ \ \mu
\in U(\la; f_1,\dots,f_n, \eps)$ and $E$ is dense in $\mathbf{M}(X)$
for the weak-* topology.
\end{lemma}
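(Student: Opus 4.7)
The plan is to show that the linear evaluation map
$$T : E \to \C^n, \qquad \mu \mapsto (\mu(f_1),\dots,\mu(f_n))$$
is surjective. Granting surjectivity, I can pick $\mu \in E$ with $T(\mu) = (\la(f_1),\dots,\la(f_n))$, which is the desired element; and then the weak-* density is immediate, since by definition any basic open neighbourhood of $\la$ has the form $U(\la;f_1,\dots,f_n,\eps)$, and such a $\mu$ satisfies $\mu(f_i)=\la(f_i)$ exactly, hence lies in $U(\la;f_1,\dots,f_n,\eps)$ for every $\eps>0$.

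As a first step, I would reduce to the case where $f_1,\dots,f_n$ are linearly independent in $C_0(X)$. Any linear relation $\sum a_i f_i = 0$ among them automatically yields $\sum a_i \mu(f_i)=0$ for every $\mu \in \mathbf{M}(X)$, in particular for $\la$, so the redundant $f_i$'s can be discarded without affecting either the system to be solved or the hypothesis on $E$.

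Next, assuming $f_1,\dots,f_n$ linearly independent, I would argue by finite-dimensional duality. If $T(E)$ were a proper subspace of $\C^n$, there would exist a nonzero covector $(c_1,\dots,c_n) \in \C^n$ annihilating it; equivalently,
$$\mu\!\left(\sum_{i=1}^n c_i f_i\right) = \sum_{i=1}^n c_i \mu(f_i) = 0 \qquad \text{for all } \mu \in E.$$
The separating hypothesis on $E$ (i.e.\ that $f \in C_0(X)$ with $\mu(f)=0$ for all $\mu \in E$ must vanish) then forces $\sum c_i f_i = 0$, contradicting the linear independence of the $f_i$. Hence $T$ is surjective.

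There is no real obstacle here; the only point to watch is the reduction to linearly independent $f_i$'s, which ensures that the duality/annihilator step actually produces a contradiction rather than a vacuous conclusion. The argument is essentially a one-line Hahn--Banach observation in the finite-dimensional target $\C^n$, using only that $E$ separates $C_0(X)$.
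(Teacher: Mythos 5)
Your proof is correct and follows essentially the same route as the paper: reduce to the case of linearly independent $f_1,\dots,f_n$ and deduce surjectivity of the evaluation map $E \to \C^n$ from the separating hypothesis. The paper simply asserts this surjectivity, whereas you spell out the finite-dimensional annihilator argument; this is just a more detailed version of the same proof.
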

\begin{proof} Without loss of generality we assume that the family $f_1,\dots,f_n$
is linearly independant. It follows that
the linear map $E \to \C^n$ given by $\mu \mapsto (\mu(f_i))_{i=1\dots
n}$ is surjective, and in particular there exists $\mu \in E$
such that $\mu(f_i) = \la(f_i)$ for all $i \in [1,n]$.
\end{proof}

We recall from \cite{HR} the definition of the following subspaces
$$
\begin{array}{lcl}
\mathbf{M}_d(X) & =& \{ \mu \in \mathbf{M}(X) \ | \ \exists E \subset
X \mbox{ countable } \ |\mu|(X \setminus E) = 0 \} \\
\mathbf{M}_c(X) & =& \{ \mu \in \mathbf{M}(X) \ | \ \forall x \in X \ \ 
\mu(\{ x \}) = 0 \} \\
\end{array}
$$
The subspaces $\mathbf{M}_d(X)$ and $\mathbf{M}_c(X)$ contain the purely discontinuous
and continuous measures, respectively. For the norm topology,
they are closed in $\mathbf{M}(X)$.

For any $x \in X$
we define the punctual measure $\delta_x \in \mathbf{M}_d(X)$
by $\delta_x(A) = 1$ if $x \in A$, $\delta_x(A) = 0$ otherwise.
Elements of $\mathbf{M}_d(X)$ have the form $\sum a_n \delta_{x_n}$
where $x_n \in X$ and $\sum | a_n | < \infty$.

\subsubsection{Haar measure and convolution}

Let $G$ be a locally compact topological group with neutral element $e$, and $d x$ a left-invariant
Haar measure on $G$. We let $\mathbf{M}_a(G)$ be the subspace
in $\mathbf{M}_c(G)$ of measures which are absolutely
continuous with respect to $d x$.

The multiplication $G \times G \to G$ gives rise to the convolution
of measures $(\mu_1,\mu_2) \mapsto \mu_1 * \mu_2$. With this operation
(see \cite{HR} 19.6) $\mathbf{M}(G)$ is a Banach algebra whose unit
is $\delta_e$. Then $\mathbf{M}_d(G)$ is a subalgebra of $\mathbf{M}(G)$
and $\mathbf{M}_c(G)$ as well as its subspace $\mathbf{M}_a(G)$ are
ideals on both side for $\mathbf{M}(G)$. All of them are closed
with respect to the norm topology. We refer to \cite{HR} 19.15, 19.16 and 19.18 for
all these elementary facts.

\subsubsection{What happens when $G$ is finite ? }

If $G$ is compact, $d x$ can be chosen
such that $d x(G) = 1$, and $d x$ is also right-invariant. Moreover,
for all $f \in L^1(G)$ we have  (see \cite{HR} 20.2 (ii))
$$
\int_G f(t^{-1}) dt = \int_G f(t) dt
$$
Finally, recall that we can identify $L^1(G)$ with $\mathbf{M}_a(G)$ through $\varphi \mapsto \int f \varphi$.

If $G$ is discrete, one has $\mathbf{M}_d(G) = \mathbf{M}_c(G) =
\mathbf{M}(G)$. If $G$ is compact and discrete, that is if $G$ is finite
with the discrete topology, then
one has moreover $\mathbf{M}(G) = \mathbf{M}_a(G) = L^1(G)$, so that
all the algebras introduced here are identified. Moreover,
the group algebra $\C G$ can be identified with them by $g \mapsto \delta_g$.

\subsection{The Lie algebras $\mathcal{L}_{\alpha}(G)$ and
$\mathbf{M}_{\alpha}(G)$}

Let $G$ be a locally compact topological group, $\alpha : G \to \C^{\times}$
a continuous bounded character, and $\tau$ a continuous bounded
automorphism of $G$ with $\tau^2 = 1$. To any $\varphi \in C_0(G)$ we
associate $\hat{\varphi} \in C_0(G)$ defined by
$\hat{\varphi}(x) = \alpha(x) \varphi(\tau(x)^{-1})$. This is a linear
endomorphism of $C_0(G)$, from which we deduce an endomorphism
$\mu \mapsto \mu^+$
of $\mathbf{M}(G)$. We have $\mu^+(\varphi) = \hat{\varphi}$ for
all $\varphi \in C_0(G)$.

\begin{prop} The linear map $\mu \mapsto \mu^+$ is an involutive
antiautomorphism of algebra of $\mathbf{M}(G)$ setwise stabilizing $\mathbf{M}_d(G)$.
 It is continuous for the norm topology and for the weak-* topology, and
$\delta_x^+ = \alpha(x) \delta_{\tau(x)^{-1}}$ for all $x \in G$.
\end{prop}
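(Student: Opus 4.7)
The statement is a list of elementary verifications, so the plan is to reduce everything to dual statements about the map $\varphi \mapsto \hat\varphi$ on $C_0(G)$ and then feed them through the duality $\mathbf{M}(G) = C_0(G)^{\ast}$. First I would check that $\varphi \mapsto \hat\varphi$ is indeed a bounded linear involution of $C_0(G)$: boundedness of $\alpha$ ensures $\hat\varphi \in C_0(G)$ with $\|\hat\varphi\|_\infty \leq \|\alpha\|_\infty \|\varphi\|_\infty$, and $\hat{\hat\varphi} = \varphi$ reduces to the identity $\alpha(x)\alpha(\tau(x)^{-1}) = \alpha(x)\alpha(\tau(x))^{-1} = 1$, which holds because $\alpha$ is $\tau$-invariant (this invariance is forced by the requirement that $S : g \mapsto \alpha(g)\tau(g)^{-1}$ be an involution, as a short computation shows), together with $\tau^2 = \id$ giving $\tau(\tau(x)^{-1})^{-1} = x$.

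Dualising, $\mu \mapsto \mu^+$ defined by $\mu^+(\varphi) = \mu(\hat\varphi)$ is a linear involutive endomorphism of $\mathbf{M}(G)$. Both continuity assertions are then immediate: for the norm topology, $\|\mu^+\| \leq \|\alpha\|_\infty \|\mu\|$ by the above estimate on $\hat\varphi$; for the weak-$\ast$ topology, for each $f \in C_0(G)$ the map $\mu \mapsto \mu^+(f) = \mu(\hat f)$ is by definition weak-$\ast$ continuous, so $\mu \mapsto \mu^+$ is continuous into $\mathbf{M}(G)$ with the weak-$\ast$ topology. The formula $\delta_x^+ = \alpha(x)\delta_{\tau(x)^{-1}}$ is read off directly: $\delta_x^+(\varphi) = \hat\varphi(x) = \alpha(x)\varphi(\tau(x)^{-1})$. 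Since every element of $\mathbf{M}_d(G)$ has the form $\sum a_n \delta_{x_n}$ with $\sum |a_n| < \infty$, applying $(\ )^+$ term by term and invoking norm continuity shows $\mu^+ = \sum a_n \alpha(x_n) \delta_{\tau(x_n)^{-1}} \in \mathbf{M}_d(G)$.

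The only point that requires honest calculation is the antiautomorphism property $(\mu \ast \nu)^+ = \nu^+ \ast \mu^+$. This I would verify by testing against $\varphi \in C_0(G)$: using the definition of convolution,
\[
(\mu \ast \nu)^+(\varphi) = \int\!\!\int \alpha(xy)\varphi(\tau(xy)^{-1}) \, d\mu(x)\, d\nu(y)
= \int\!\!\int \alpha(x)\alpha(y)\varphi(\tau(y)^{-1}\tau(x)^{-1}) \, d\mu(x)\, d\nu(y),
\]
using that $\alpha$ is a character and $\tau$ an antiautomorphism after inversion. On the other hand,
\[
(\nu^+ \ast \mu^+)(\varphi) = \int\!\!\int \varphi(uv)\, d\nu^+(u)\, d\mu^+(v)
= \int\!\!\int \alpha(u)\alpha(v)\varphi(\tau(u)^{-1}\tau(v)^{-1}) \, d\mu(v)\, d\nu(u),
\]
by two applications of the definition of $(\ )^+$. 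Fubini (valid since both measures are bounded) and the renaming $(u,v) \leftrightarrow (y,x)$ identifies the two double integrals.

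The main obstacle, such as it is, is bookkeeping in this last calculation: one must keep straight that $\tau$ is an automorphism while $S$ reverses order, so that $\tau(xy)^{-1} = \tau(y)^{-1}\tau(x)^{-1}$, and that the $\tau$-invariance of $\alpha$ is what makes $(\ )^+$ an honest involution rather than just an involution up to a scalar. Once the identity is written in symmetric form as above, the rest of the statements fall out by continuity and density of $\mathbf{M}_d(G)$ arguments.
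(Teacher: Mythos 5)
Your proof is correct, and it covers every assertion of the proposition: the dual definition $\mu^+(\varphi)=\mu(\hat\varphi)$, involutivity, the antiautomorphism identity, both continuity statements, the formula for $\delta_x^+$, and the stability of $\mathbf{M}_d(G)$. The only place where you diverge from the paper is the antiautomorphism step: the paper first proves the tensor identity $(\mu_1^+\otimes\mu_2^+)(\psi)=(\mu_1\otimes\mu_2)(\hat\psi)$ on $C_0(G)\otimes C_0(G)$ and extends it by density to $C_0(G\times G)$, then specializes to $\psi(x,t)=\varphi(xt)$, whereas you verify $(\mu*\nu)^+=\nu^+*\mu^+$ directly by writing both sides as iterated integrals and invoking Fubini; these are two standard packagings of the same computation, yours being slightly more hands-on (it implicitly uses that $v\mapsto\varphi(uv)$ and the resulting inner integral stay in $C_0(G)$, which is routine for bounded measures), the paper's pushing the analytic content into the density of $C_0(G)\otimes C_0(G)$ in $C_0(G\times G)$. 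You also add a point the paper leaves implicit: involutivity of $\varphi\mapsto\hat\varphi$ requires $\alpha\circ\tau=\alpha$, and you correctly observe that this is forced by requiring $g\mapsto\alpha(g)\tau(g)^{-1}$ to be an involution; making that hypothesis explicit is a genuine (if small) improvement, since the paper's proof simply asserts involutivity. One cosmetic remark: your phrase ``$\tau$ an antiautomorphism after inversion'' should be read as ``$x\mapsto\tau(x)^{-1}$ is an antihomomorphism,'' which is what your computation actually uses.
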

\begin{proof} For $\psi \in C_0(G\times G)$
we denote $\hat{\psi}$ the function $\psi(x,t) = \alpha(xt) \psi(x^{-1},
t^{-1})$. Let $\mu_1,\mu_2 \in \mathbf{M}(G)$.
It is easily checked that $\mu_1^+ \otimes \mu_2^+(\psi) = 
\mu_1 \otimes \mu_2(\hat{\psi})$ for all $\psi \in C_0(G) \otimes C_0(G)$,
hence, for all $\psi \in C_0(G \times G)$ by density of $C_0(G) \otimes
C_0(G)$ in $C_0(G \times G)$. The verification that $\mu \mapsto \mu^+$ is 
an antiautomorphism of algebras follows by an easy calculation.
It is involutive because $\varphi \mapsto \hat{\varphi}$ is involutive for
$\varphi \in C_0(G)$. It is continuous for the norm topology because it
is $|| \alpha ||_{\infty}$-lipschitz, and for the weak-* topology because
the inverse image of $U(\la; f_1,\dots,f_k,\eps)$ is
the open set $U(\la^+; \hat{f}_1,\dots,\hat{f}_k,\eps)$. Finally, for
all $\varphi\in C_0(G)$ one has $\delta_x^+(\varphi) = \alpha(x) \delta_{x^{-1}}(\varphi)
$ hence $\delta_x^+ = \alpha(x) \delta_{x^{-1}}$. It follows that the set
of linear combinations of punctual measures is stable under $\mu \mapsto
\mu^+$, and so is its closure $\mathbf{M}_d(G)$ for the norm
topology by a continuity argument.
\end{proof}

\begin{cor} The vector space $\mathbf{M}_{\alpha}(G) =
\{ \mu \in \mathbf{M}(G) \ | \ \mu^+ = -\mu \}$
is a Lie subalgebra of $\mathbf{M}(G)$ which is closed for the
norm and weak-* topology.
\end{cor}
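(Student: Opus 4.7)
The plan is to observe that the corollary follows directly from the proposition just proved: the map $\sigma : \mu \mapsto \mu^+$ is an involutive antiautomorphism of the convolution algebra $\mathbf{M}(G)$ which is continuous for both the norm and the weak-* topology. Given this, the subspace $\mathbf{M}_\alpha(G) = \{\mu \mid \mu^+ = -\mu\}$ is the $(-1)$-eigenspace of $\sigma$, and all three claims (Lie subalgebra, norm-closed, weak-* closed) are formal consequences.

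First I would verify that $\mathbf{M}_\alpha(G)$ is stable under the commutator bracket $[\mu_1,\mu_2] = \mu_1 * \mu_2 - \mu_2 * \mu_1$. If $\mu_1^+ = -\mu_1$ and $\mu_2^+ = -\mu_2$, then because $\sigma$ is an antiautomorphism we have
$$
(\mu_1 * \mu_2)^+ = \mu_2^+ * \mu_1^+ = (-\mu_2)*(-\mu_1) = \mu_2 * \mu_1,
$$
so $[\mu_1,\mu_2]^+ = \mu_2*\mu_1 - \mu_1*\mu_2 = -[\mu_1,\mu_2]$, which places the bracket in $\mathbf{M}_\alpha(G)$. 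Note that this is the standard mechanism producing Lie subalgebras out of involutive antiautomorphisms of an associative algebra, exactly as advertised in the introduction.

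Next I would handle the closure statements uniformly by writing $\mathbf{M}_\alpha(G) = \ker(\sigma + \mathrm{id})$. Since $\sigma$ is continuous for the norm topology (because $\sigma$ is $\|\alpha\|_\infty$-Lipschitz, as already noted), and since $\sigma$ is continuous for the weak-* topology (because the preimage of a basic open $U(\lambda; f_1,\dots,f_k,\eps)$ is $U(\lambda^+; \hat f_1,\dots,\hat f_k, \eps)$, again from the proposition), the map $\sigma + \mathrm{id}$ is continuous for both topologies. Hence its kernel is closed for both.

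There is no real obstacle here: every ingredient has been packaged into the preceding proposition, and the only content of the corollary is to read off the kernel of $\sigma + \mathrm{id}$ and apply the universal identity $(\mu_1*\mu_2)^+ = \mu_2^+*\mu_1^+$ to get the bracket closure. The one small point I would state explicitly, to keep the harmonic-analytic parallel with the finite case transparent, is that when $G$ is finite and discrete, $\sigma$ sends $\delta_g$ to $\alpha(g)\delta_{g^{-1}}$, so under the identification $\C G = \mathbf{M}(G)$ we recover $\mathcal{L}_\alpha(G) = \mathbf{M}_\alpha(G)$ (and more generally $\mathcal{L}_{\alpha,\tau}(G)$ in the twisted case), which motivates the notation and prepares the ground for the second proof of the main theorem.
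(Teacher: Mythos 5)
Your proposal is correct and matches the paper's intent exactly: the paper states this corollary without proof as an immediate consequence of the preceding proposition, and your argument (the $(-1)$-eigenspace of an involutive antiautomorphism is closed under the commutator, and is the kernel of the map $\sigma+\mathrm{id}$, which is continuous for both topologies by the proposition) is precisely the formal verification being left to the reader. The closing remark about recovering $\mathcal{L}_\alpha(G)=\mathbf{M}_\alpha(G)$ for finite $G$ is extraneous to this corollary (the paper derives it later from the density proposition), but it does not affect the correctness of your proof.
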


\begin{defi} We let $\mathcal{L}_{\alpha}(G)$ be the Lie subalgebra
of $\mathbf{M}_d(G)$ spanned by the elements $\delta_x - \delta_x^+ = \delta_x - \alpha(x) \delta_{\tau(x)^{-1}}$
for $x \in G$.
\end{defi}

\begin{prop} \label{propLadense} $\mathcal{L}_{\alpha}(G)$ is dense in $\mathbf{M}_d(G)
\cap \mathbf{M}_{\alpha}(G)$ for the norm topology, and is
dense in $\mathbf{M}_{\alpha}(G)$ for the weak-* topology.
\end{prop}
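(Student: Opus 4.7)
The plan is to exploit the continuous linear projector $p : \mathbf{M}(G) \to \mathbf{M}(G)$ defined by $p(\mu) = \tfrac{1}{2}(\mu - \mu^+)$. By the preceding proposition, $\mu \mapsto \mu^+$ is continuous for both the norm and the weak-$*$ topology, so $p$ is too; moreover $p^2 = p$ and its image is exactly $\mathbf{M}_{\alpha}(G) = \{\mu : \mu^+ = -\mu\}$. The key observation is that on a punctual measure $p(\delta_x) = \tfrac{1}{2}(\delta_x - \alpha(x)\delta_{\tau(x)^{-1}})$, so if $E$ denotes the linear span of the $\{\delta_x : x \in G\}$, then $p(E)$ is precisely the linear span of the generators of $\mathcal{L}_{\alpha}(G)$; in particular $p(E) \subset \mathcal{L}_{\alpha}(G) \subset \mathbf{M}_d(G) \cap \mathbf{M}_{\alpha}(G)$.

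For the norm density, I would take $\mu \in \mathbf{M}_d(G) \cap \mathbf{M}_{\alpha}(G)$ and write $\mu = \sum_n a_n \delta_{x_n}$ with $\sum_n |a_n| < \infty$, as recalled from \cite{HR}. The partial sums $\mu_N = \sum_{n\leq N} a_n \delta_{x_n}$ lie in $E$ and converge to $\mu$ in norm; applying the norm-continuous map $p$ yields $p(\mu_N) \in p(E) \subset \mathcal{L}_{\alpha}(G)$ converging in norm to $p(\mu) = \mu$, the last equality holding because $\mu^+ = -\mu$ by hypothesis. This gives the first assertion.

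For the weak-$*$ density, I would apply Lemma \ref{lemweak} directly to $E$: for $\varphi \in C_0(G)$, the condition $\delta_x(\varphi) = \varphi(x) = 0$ for all $x \in G$ forces $\varphi = 0$, so $E$ is weak-$*$ dense in $\mathbf{M}(G)$. Given $\nu \in \mathbf{M}_{\alpha}(G)$, one has $p(\nu) = \nu$, so picking any net $\mu_i \in E$ with $\mu_i \to \nu$ weak-$*$, the weak-$*$ continuity of $p$ yields $p(\mu_i) \to p(\nu) = \nu$, with $p(\mu_i) \in \mathcal{L}_{\alpha}(G)$. Neither step involves a genuine obstacle; the only point worth verifying carefully is that $p$ really lands in the linear span of the generators, and the mild care is in applying the lemma on the ambient $\mathbf{M}(G)$ and then pushing forward under $p$ rather than trying to use it directly on $\mathbf{M}_{\alpha}(G)$.
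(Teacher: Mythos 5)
Your proof is correct and is essentially the paper's argument: the paper also represents a discrete anti-invariant measure as an absolutely convergent sum of point masses for the norm statement, and for the weak-$*$ statement it applies Lemma \ref{lemweak} to the span $E$ of the $\delta_x$ (after enlarging the neighborhood by the $\hat f_i$) and then passes to $\nu = \frac{\mu - \mu^+}{2}$, which is exactly your projector $p$. You merely repackage this via the norm- and weak-$*$-continuity of $\mu \mapsto \mu^+$ from the preceding proposition (using partial sums and nets instead of explicit neighborhoods), which is a harmless streamlining of the same idea.
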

\begin{proof}
By definition $\mathbf{M}_d(G) \cap \mathbf{M}_{\alpha}(G) =
\{ \mu \in \mathbf{M}_d(G) \ | \ \mu^+ = - \mu \}$ contains
$\mathcal{L}_{\alpha}(G)$. An element of $\mathbf{M}_d(G)$
can be written as $\mu = \sum a_n \delta_{x_n}$ with $x_n \in X$
being distincts and $\sum | a_n | < \infty$. Because of
the continuity of $\mu \mapsto \mu^+$ for the norm topology,
the condition $\mu^+ = - \mu$ implies $\mu = \sum b_n (\delta_{y_n}
- \delta_{y_n}^+) $ where $\{ y_n \} \cup \{ y_n^{-1} \} = \{ x_n \}$,
hence $\mu$ is the limit of a sequence of elements in $\mathcal{L}_{\alpha}
(G)$ with respect to the norm topology. Since $\mathbf{M}_d(G) \cap
\mathbf{M}_{\alpha}(G)$ is closed in $\mathbf{M}(G)$ for the norm
topology in follows that $\mathcal{L}_{\alpha}$ is dense in
$\mathbf{M}_d(G) \cap \mathbf{M}_{\alpha}(G)$ for this topology.

Let $E$ be the vector subspace of $\mathbf{M}_d(G)$ spanned by the
$\delta_x, x \in G$. Because of $\forall x \in G \ \ \delta_x(\varphi) = 0
\Rightarrow \varphi = 0$ lemma \ref{lemweak} implies that $E$ is dense
in $\mathbf{M}(G)$ for the weak-* topology. Moreover, if $\la \in
\mathbf{M}_{\alpha}(G)$ and a neighborhood
$U = U(\la ; f_1,\dots,f_k,\eps)$ of $\la$ are given, we let
$V = U(\la ; f_1,\dots,f_k, \hat{f}_1,\dots,\hat{f}_k,\eps) \subset U$
be a smaller open neighborhood of $\la$. Again because of lemma
\ref{lemweak} there exists $\mu \in E$ such that, for all $i \in [1,k]$,
$\mu(f_i) = \la(f_i)$ and $\mu(\hat{f}_i) = \la(\hat{f}_i)$. Then $\nu =
\frac{\mu - \mu^+}{2} \in \mathcal{L}_{\alpha}(G)$ satisfies $\nu \in U$.
Because $\mathbf{M}_{\alpha}(G)$ is closed in $\mathbf{M}(G)$
for the weak-* topology is follows that $\mathcal{L}_{\alpha}(G)$
is dense in $\mathbf{M}_{\alpha}(G)$.
\end{proof}

\begin{cor} If $G$ is finite then $\mathcal{L}_{\alpha}(G) = \mathbf{M}_{\alpha}(G)$.
\end{cor}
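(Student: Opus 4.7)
The plan is to derive the corollary as an immediate specialization of Proposition \ref{propLadense} once one exploits the fact that, for a finite group, all the spaces of measures collapse to a single finite-dimensional space. Recall from the paper's discussion of ``What happens when $G$ is finite'' that if $G$ is finite and equipped with the discrete topology then $\mathbf{M}(G) = \mathbf{M}_d(G) = L^1(G) = \C G$, and in particular $\mathbf{M}_{\alpha}(G)$ is itself a finite-dimensional vector space identified with a subspace of $\C G$ of dimension at most $\# G$.

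First I would observe that, since $\mathbf{M}_d(G) = \mathbf{M}(G)$ in the finite case, one has
$$
\mathbf{M}_d(G) \cap \mathbf{M}_{\alpha}(G) = \mathbf{M}_{\alpha}(G).
$$
Proposition \ref{propLadense} then says that $\mathcal{L}_{\alpha}(G)$ is dense in $\mathbf{M}_{\alpha}(G)$ for the norm topology inherited from $\mathbf{M}(G)$.

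Second, I would invoke the standard fact that in a finite-dimensional normed vector space every linear subspace is automatically closed. Since $\mathbf{M}_{\alpha}(G)$ is finite-dimensional and $\mathcal{L}_{\alpha}(G)$ is a linear subspace of it, the closure of $\mathcal{L}_{\alpha}(G)$ coincides with $\mathcal{L}_{\alpha}(G)$ itself. Combined with the density statement, this forces $\mathcal{L}_{\alpha}(G) = \mathbf{M}_{\alpha}(G)$, which is the desired identification (and by the identification of $\mathbf{M}(G)$ with $\C G$, this recovers the theorem in Section~1).

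Since the whole content of the argument is a one-line specialization, there is no real obstacle; the only point worth noting is that one could alternatively invoke the weak-$*$ density part of Proposition \ref{propLadense}, which in finite dimensions coincides with norm density because all Hausdorff vector topologies on a finite-dimensional space agree. The substantive work has already been done in establishing Proposition \ref{propLadense}: the finiteness hypothesis is used only to remove any distinction between $\mathbf{M}(G)$, $\mathbf{M}_d(G)$ and $\C G$, and to make closure automatic.
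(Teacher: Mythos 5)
Your proposal is correct and is exactly the argument the paper intends: the corollary is stated without proof as an immediate consequence of Proposition \ref{propLadense}, using that for finite $G$ one has $\mathbf{M}_d(G)=\mathbf{M}(G)=\C G$ finite-dimensional, so the dense subspace $\mathcal{L}_{\alpha}(G)$ is automatically closed and hence equals $\mathbf{M}_{\alpha}(G)$. Nothing is missing.
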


\subsection{The Lie algebra $\mathcal{M}_{\alpha}(G)$ of a
compact group}

Let $G$ be a \emph{compact} topological group, and $\alpha : G \to \C^{\times}$
a (necessarily bounded) continuous character. We have $|\alpha(x)| = 1$
for all $x \in G$. Let $\tau$ a continuous involutive automorphism
of $G$ which preserves the Haar measure. If $C(G)$ denotes the
space of (complex valued) functions on $G$, we have $C(G) = C_0(G)
\subset L^1(G) \simeq \mathbf{M}_a(G)$. We let
$N_f \in \mathbf{M}_a(G)$ be the measure corresponding to $f \in L^1(G)$
with respect to the Haar mesure of volume 1 chosen on $G$. For
$f \in L^1(G)$ we let $f^{\bigstar} \in L^1(G)$ denote $x \mapsto f(x^{-1})$.
Notice that, if $f \in C(G)$ then $f^{\bigstar} \in C(g)$ and that,
for all $f \in L^1(G)$ we have $\int f = \int f^{\bigstar}$.

\begin{lemma} $\mathbf{M}_a(G)$ is stable under $\mu \mapsto \mu^+$
and, for all $f \in L^1(G)$, $N_f^+ = N_{(\alpha f)^{\bigstar}\circ \tau}$
\end{lemma}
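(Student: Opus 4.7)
The plan is to verify the formula $N_f^+ = N_{(\alpha f)^{\bigstar}\circ \tau}$ by direct computation on test functions $\varphi \in C(G)$; the stability statement is then an immediate consequence, because the right hand side is again of the form $N_g$ with $g \in L^1(G)$.

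First I unravel the definitions. By construction of the involution one has
\[
N_f^+(\varphi) \;=\; N_f(\hat\varphi) \;=\; \int_G f(x)\,\alpha(x)\,\varphi(\tau(x)^{-1})\,dx.
\]
Next I perform the change of variable $y = \tau(x)^{-1}$, so that $x = \tau(y)^{-1}$ (using $\tau^2 = \mathrm{id}$ and the fact that $\tau$ is a group automorphism). The key point here is that this substitution preserves the Haar measure: $\tau$ does so by assumption, and on the compact group $G$ the inversion $x \mapsto x^{-1}$ is Haar-measure-preserving by the formula $\int f(t^{-1})\,dt = \int f(t)\,dt$ recalled from \cite{HR} 20.2(ii). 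Consequently $dx = dy$ and the integral becomes
\[
\int_G f(\tau(y)^{-1})\,\alpha(\tau(y)^{-1})\,\varphi(y)\,dy.
\]

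Finally I identify the density. Setting $h = \alpha f$, one has $h^{\bigstar}(z) = \alpha(z^{-1}) f(z^{-1})$, hence
\[
(h^{\bigstar} \circ \tau)(y) \;=\; \alpha(\tau(y)^{-1})\,f(\tau(y)^{-1}),
\]
which is precisely the density appearing above. Thus $N_f^+(\varphi) = \int_G \bigl((\alpha f)^{\bigstar}\circ\tau\bigr)(y)\,\varphi(y)\,dy = N_{(\alpha f)^{\bigstar}\circ \tau}(\varphi)$ for every $\varphi \in C(G)$, which gives the claimed identity and shows in particular that $N_f^+ \in \mathbf{M}_a(G)$, establishing stability.

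There is no real obstacle; the only delicate point is to track where the hypothesis of compactness (to ensure the Haar measure is both left- and right-invariant, and in particular invariant under inversion) and the hypothesis that $\tau$ preserves the Haar measure are used, since both are needed to justify the change of variable $y = \tau(x)^{-1}$.
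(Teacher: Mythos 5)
Your proof is correct and follows essentially the same route as the paper: unwind $N_f^+(\varphi)=N_f(\hat\varphi)$ and move the twist onto the density via a Haar-measure-preserving change of variables, the only cosmetic difference being that you perform the substitution $y=\tau(x)^{-1}$ in one step where the paper uses inversion invariance and $\tau$-invariance in two successive steps. Your identification of where compactness (inversion invariance of the Haar measure) and the hypothesis that $\tau$ preserves the Haar measure enter is exactly the content of the paper's computation.
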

\begin{proof} Since $f \in L^1(G) \Rightarrow \alpha f \in L^1(G)$
it is sufficient to prove the latter statement. Let $f \in L^1(G)$ and $\varphi
\in C(G)$ Then
$$
N_f^+(\varphi) = N_f(\hat{\varphi}) = \int \alpha (\varphi^{\bigstar} \circ \tau) f
= \int \alpha^{\bigstar} (\varphi\circ \tau) f ^{\bigstar} = \int \varphi
(\alpha f)^{\bigstar}\circ \tau = N_{(\alpha f)^{\bigstar}\circ \tau}(\varphi)
$$
hence $N_f^+ = N_{(\alpha f)^{\bigstar}\circ \tau} \in \mathbf{M}_a(G)$.
\end{proof}

From now on we identify $L^1(G)$ with $\mathbf{M}_a(G)$, hence $C(G)$ with a subspace
of $\mathbf{M}_a(G)$. In particular we note $f^+ = (\alpha f)^{\bigstar}\circ \tau$.

\begin{defi} Let $\mathcal{M}_{\alpha}^1(G) = \{ \mu \in \mathbf{M}^a(G)
\ | \ \mu^+ = -\mu \} = \mathbf{M}_a(G) \cap \mathbf{M}_{\alpha}(G)$
and $\mathcal{M}_{\alpha}^c(G) = C(G) \cap \mathcal{M}_{\alpha}^1(G) =
C(G) \cap \mathbf{M}_{\alpha}(G)$.
\end{defi}

\begin{prop} \label{propMaCdense} $\mathcal{M}_{\alpha}^1(G)$ and $\mathcal{M}_{\alpha}^c(G)$
are dense in $\mathbf{M}_{\alpha}(G)$ for the weak-* topology.
\end{prop}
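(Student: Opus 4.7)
The plan is to adapt the last paragraph of the proof of Proposition \ref{propLadense}, substituting $C(G)$ (viewed as a subspace of $\mathbf{M}_a(G)$ via $f \mapsto N_f$) for the span of Dirac measures. Since $\mathcal{M}_\alpha^c(G) \subset \mathcal{M}_\alpha^1(G) \subset \mathbf{M}_\alpha(G)$, it will be enough to show weak-* density of the smaller space $\mathcal{M}_\alpha^c(G)$; the density of $\mathcal{M}_\alpha^1(G)$ is then automatic.

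The first step is to note that $C(G)$, viewed inside $\mathbf{M}(G)$, is itself weak-* dense. Indeed the hypothesis of Lemma \ref{lemweak} reduces to: if $\varphi \in C(G)$ satisfies $\int_G f\varphi\,dx = 0$ for every $f \in C(G)$, then $\varphi = 0$, which one sees by taking $f = \overline{\varphi}$.

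Fix now $\mu \in \mathbf{M}_\alpha(G)$ and a basic open neighborhood $U = U(\mu; f_1,\ldots,f_k,\eps)$. Enlarging the list of test functions, consider $V = U(\mu; f_1,\ldots,f_k,\hat{f}_1,\ldots,\hat{f}_k,\eps) \subset U$. By the density just established together with Lemma \ref{lemweak}, there exists $f \in C(G)$ such that $N_f(f_i) = \mu(f_i)$ and $N_f(\hat{f}_i) = \mu(\hat{f}_i)$ for every $i$. Set
$$
\nu = \frac{N_f - N_f^+}{2} = N_{(f - f^+)/2}.
$$
Since $f^+ = (\alpha f)^{\bigstar} \circ \tau$ is continuous (because $\alpha$ and $\tau$ are), $(f - f^+)/2 \in C(G)$, so $\nu \in \mathbf{M}_a(G) \cap C(G)$. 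Moreover $\nu^+ = -\nu$ by involutivity of $\mu \mapsto \mu^+$, hence $\nu \in \mathcal{M}_\alpha^c(G)$. Using $\mu^+ = -\mu$, one computes
$$
\nu(f_i) = \tfrac{1}{2}\bigl(N_f(f_i) - N_f(\hat{f}_i)\bigr) = \tfrac{1}{2}\bigl(\mu(f_i) - \mu^+(f_i)\bigr) = \mu(f_i),
$$
so $\nu \in V \subset U$, which proves the desired density.

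There is no genuine obstacle here: the argument is the continuous analogue of the symmetrization trick used at the end of Proposition \ref{propLadense}, and the only point requiring a moment's care is that one must enlarge the list of test functions to include the $\hat{f}_i$ before applying Lemma \ref{lemweak}, so that the averaged measure $\nu$ still matches $\mu$ on the original $f_i$.
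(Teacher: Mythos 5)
Your argument is correct and is essentially the paper's own proof: both apply Lemma \ref{lemweak} to $C(G)$ embedded in $\mathbf{M}_a(G)$ via $f \mapsto N_f$, enlarge the list of test functions by the $\hat{f}_i$, and symmetrize with $\nu = \frac{N_f - N_f^+}{2}$. The only (harmless) difference is that you handle $\mathcal{M}_{\alpha}^1(G)$ by the inclusion $\mathcal{M}_{\alpha}^c(G) \subset \mathcal{M}_{\alpha}^1(G)$, whereas the paper runs the same argument in parallel for $f \in L^1(G)$ and $f \in C(G)$.
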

\begin{proof} The proof is similar to the one in proposition
\ref{propLadense}. Let $\la \in  \mathbf{M}_{\alpha}(G)$,
$U = U(\la;f_1,\dots,f_n,\eps)$ an open neighborhood
of $\la$ and $V = U(\la;f_1,\dots,f_n,\hat{f}_1,\dots,\hat{f}_n,\eps) \subset U$.
Let $\varphi \in C(G)$. One has
$$
\begin{array}{rcl}
(\forall f \in L^1(G) \ \ \int f \varphi = 0) & \Rightarrow & \varphi = 0 \\
(\forall f \in C(G) \ \ \int f \varphi = 0) & \Rightarrow & \varphi = 0
\end{array}
$$
hence by lemma \ref{lemweak} there exists $\mu = N_f$ with $f \in L^1(G)$
or $f \in C(G)$ such that, for all $k \in [1,n]$ one has
$\mu(f_k) = \la(f_k)$ and $\mu(\hat{f}_k) = \la(\hat{f}_k)$. Letting
$\nu = \frac{\mu - \mu^+}{2}$ one gets $\nu \in \mathcal{M}_{\alpha}^1(G)$
or $\nu \in \mathcal{M}_{\alpha}^c(G)$ and $\nu \in V$.
\end{proof}

Let $\widehat{G}$ denote the unitary dual of $G$, namely the set of irreducible
unitary representations $u : G \to U(H)$ up to isomorphism.
Since $G$ is compact, all such Hilbert spaces $H$ endowed with their
$G$-invariant hermitian scalar product $<\ , \ >_H$
are finite dimensional. We let $\mathcal{A}_u(G) \subset C(G)$ be
the space spanned by the matrix coefficients $a^u_{v,w} : x \mapsto
<u(x) v,w>_H$ for $v,w \in H$. It is a classical fact (see \cite{HR2} \S 27 (27.49))
that this correspondence induces an isomorphism of left $G$-modules
$\mathcal{A}_u(G)
\simeq H^* \otimes H \simeq \End(H)^* \simeq \End(H^*)$. We let
$\mathcal{A}(G) \subset C(G)$ be the space spanned by the $\mathcal{A}_u(G)$
for $u \in \widehat{G}$. It is the direct sum of these
subspaces, and it is a subalgebra of $C(G)$ under the convolution
product.

\begin{prop} For all $u \in \widehat{G}$ we have $\mathcal{A}_u(G)^+
= \mathcal{A}_{(\alpha u)^*\circ \tau} (G)$ and, in particular,
$\mathcal{A}(G)$ is stable under $f \mapsto f^+$, as well as
$\mathcal{A}_u(G)$ if $u \simeq (\alpha u)^*$. Moreover,
if $u \not\simeq (\alpha u)^*$ then the linear map
$f \mapsto f - f^+$ is a Lie algebra isomorphism from $\mathcal{A}_u(G)$ to
$(\mathcal{A}_u(G) \oplus \mathcal{A}_u(G)^+) \cap \mathbf{M}_{\alpha}(G)$.
\end{prop}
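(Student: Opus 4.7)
The plan is to work entirely at the level of matrix coefficients, using Schur orthogonality (Peter-Weyl) to control convolutions between different irreducibles, together with the antiautomorphism property of $\mu\mapsto\mu^+$ already established.

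First I would compute $f^+$ for a matrix coefficient $f = a^u_{v,w}$. By the formula $f^+(x) = \alpha(\tau(x))^{-1} f(\tau(x)^{-1})$ and unitarity of $u$, one gets $f(\tau(x)^{-1}) = \langle v, u(\tau(x)) w\rangle_H = \overline{\langle u(\tau(x)) w, v\rangle}_H$. Since $|\alpha(\tau(x))| = 1$, this simplifies to $f^+(x) = \overline{\alpha(\tau(x)) \langle u(\tau(x)) w, v\rangle_H}$, namely the complex conjugate of a matrix coefficient of the representation $\alpha u \circ \tau$. For a unitary representation, complex conjugation of matrix coefficients produces matrix coefficients of the contragredient representation, and $(\alpha u \circ \tau)^* = (\alpha u)^* \circ \tau$. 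Thus $\mathcal{A}_u(G)^+ \subseteq \mathcal{A}_{(\alpha u)^* \circ \tau}(G)$. Since $^+$ is involutive and both spaces have dimension $(\dim H)^2$, the inclusion is an equality. Stability of $\mathcal{A}(G)$ under $^+$, and of $\mathcal{A}_u(G)$ when $u \simeq (\alpha u)^* \circ \tau$, are then immediate.

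Assume now $u \not\simeq (\alpha u)^* \circ \tau$. By Peter-Weyl, $\mathcal{A}_u(G) \cap \mathcal{A}_u(G)^+ = 0$, so $\mathcal{A}_u(G) \oplus \mathcal{A}_u(G)^+$ is a genuine direct sum, stable under $^+$. For $f \in \mathcal{A}_u(G)$ and $g \in \mathcal{A}_u(G)^+$, the condition $(f+g)^+ = -(f+g)$ decomposes along this direct sum into the single equation $g = -f^+$. Hence $\phi : f \mapsto f - f^+$ is a linear bijection $\mathcal{A}_u(G) \to (\mathcal{A}_u(G) \oplus \mathcal{A}_u(G)^+) \cap \mathbf{M}_{\alpha}(G)$.

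The main step, and the one I expect to require the most care, is showing that $\phi$ is a Lie algebra homomorphism when $\mathcal{A}_u(G)$ is equipped with the bracket from the convolution product (under which $\mathcal{A}_u(G) \simeq \End(H) \simeq \gl(H)$ as associative algebras, hence as Lie algebras). Since $u$ and $(\alpha u)^* \circ \tau$ are non-isomorphic irreducibles, Schur orthogonality gives $\mathcal{A}_u(G) * \mathcal{A}_u(G)^+ = \mathcal{A}_u(G)^+ * \mathcal{A}_u(G) = 0$. Combined with the antiautomorphism identity $(f*g)^+ = g^+ * f^+$, this yields $[f^+, g^+] = -[f,g]^+$, and therefore
$$
[\phi(f), \phi(g)] = [f,g] + [f^+, g^+] = [f,g] - [f,g]^+ = \phi([f,g]).
$$
The only delicate point is the bookkeeping identifying the abstract contragredient $(\alpha u)^*$ with matrix coefficients on the conjugate Hilbert space; once that is pinned down, the remaining steps reduce to Schur orthogonality and the antiautomorphism property.
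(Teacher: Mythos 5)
Your proposal is correct and follows essentially the same route as the paper: an explicit computation showing $\mathcal{A}_u(G)^+=\mathcal{A}_{(\alpha u)^*\circ\tau}(G)$ (the paper phrases this via the formula $(\alpha a^u_{v,w})^{\bigstar}=a^{(\alpha u)^*}_{w^*,v^*}$, you via conjugate matrix coefficients, which is the same content), followed by the observation that $\mathcal{A}_u(G)$ and $\mathcal{A}_u(G)^+$ are distinct convolution blocks so cross products vanish, the antiautomorphism identity giving the Lie morphism property, and the same decomposition argument for injectivity and surjectivity. Your write-up merely spells out the cross-term cancellation $[\phi(f),\phi(g)]=[f,g]-[f,g]^+$ that the paper leaves implicit.
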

\begin{proof}
The first statement comes from the easily checked formula
$(\alpha a_{v,w}^u)^{\bigstar} = a^{(\alpha u)^*}_{w^*,v^*}$,
where $w^*$ for $w \in H$ denotes the linear form $<\ , w>$,
with $<\ ,\ >$ the given hermitian product on $H$. If
$u \not\simeq (\alpha u)^*\circ \tau$ then $\mathcal{A}_u(G)$
and $\mathcal{A}_u(G)^+$ are, as algebras under the convolution product,
distinct direct summands of $\mathcal{A}(G)$. Since $f \mapsto f^+$
is an antiautomorphism of the algebra $\mathcal{A}(G)$ it follows that
$f \mapsto f-f^+$ is a Lie algebra morphism $\mathcal{A}_u(G)
\to \mathcal{A}_u(G) \oplus \mathcal{A}_u(G)^+$ whose image
is contained in $\mathbf{M}_{\alpha}(g)$. Injectivity now comes
from the fact that $\mathcal{A}_u(G) \cap \mathcal{A}_u(G)^+
= \mathcal{A}_u(G) \cap \mathcal{A}_{(\alpha u)^*}(G) = \{ 0 \}$
and surjectivity from the fact that, if $f_1 \in \mathcal{A}_u(G)$
and $f_2 \in \mathcal{A}_u(G)^+$ satisfy $f_1 + f_2 \in
\mathbf{M}_{\alpha}(G)$, then $f_1^+ + f_2^+ = -f_1 - f_2$ hence
$f_1^+ + f_2 = -f_1 - f_2^+ \in \mathcal{A}_u(G) \cap
\mathcal{A}_u(G)^+ = \{ 0 \}$. It follows that $f_2 = - f_1^+$ and
$f_1 + f_2 = f_1^+ - f_1^+$ indeed belongs to the image of
$\mathcal{A}_u(G)$.
\end{proof}

As in the finite group case we introduce the equivalence relation
generated by $u  \sim (u^* \otimes \alpha)\circ \tau$ on $\widehat{G}$, and
$\widehat{G}_{\alpha} = \widehat{G}_{\alpha}^{even} \sqcup \widehat{G}_{\alpha}^{odd}$.
Note that $\mathcal{A}_{((u^* \otimes \alpha)\circ \tau)^*}(G) = \mathcal{A}_{(u \otimes \alpha^*)\circ \tau}(G) 
= \mathcal{A}_{u^*}(G)^+$.
If $\tilde{u} \in \widehat{G}_{\alpha}$ has cardinality 2, that
is $\tilde{u} = \{ u , (u^* \otimes \alpha)\circ \tau \} \in \widehat{G}_{\alpha}^{odd}$, then
we let $\gl'_{\alpha}(\tilde{u}) = (\mathcal{A}_{u^*}(G) \oplus
\mathcal{A}_{u^*}(G)^+) \cap \mathbf{M}_{\alpha}(G) \simeq \mathcal{A}_{u^*}(G)$. If
$\tilde{u} = \{ u \} \in \widehat{G}_{\alpha}^{even}$ then we let
$\osp'_{\alpha}(\tilde{u} ) = \{ f \in \mathcal{A}_{u^*}(G) \ | \ f^+ = -f \}$.

\begin{defi}
Let $\mathcal{M}_{\alpha}^{\circ}(G) = \mathcal{A}(G) \cap \mathbf{M}_{\alpha}(G)$,
that is
$$
\mathcal{M}_{\alpha}^{\circ}(G) = \left( \bigoplus_{\tilde{u} \in \widehat{G}_{\alpha}^{odd}}
\gl_{\alpha}(\tilde{u}) \right)
\oplus \left( \bigoplus_{\tilde{u} \in \widehat{G}_{\alpha}^{even}}
\osp_{\alpha}(\tilde{u}) \right)
$$
\end{defi}

Recall that the direct sums involved here are orthogonal ones with
respect to the usual hermitian scalar product $(f|g) = \int f \overline{g}$
on $C(G) \subset L^2(G)$. A straightforward calculation shows that
$f \mapsto f^+$ is unitary with respect to this scalar product :
$$
(f^+ | g^+ ) = \int \alpha^{\bigstar} f ^{\bigstar} \overline{\alpha
^{\bigstar}} \overline{g ^{\bigstar}} = 
\int \alpha \overline{\alpha} f ^{\bigstar} \overline{g}^{\bigstar}
= \int (f \overline{g}) ^{\bigstar} = \int f \overline{g} = (f| g)
$$
because $\alpha ^{\bigstar} = \overline{\alpha} = \alpha^{-1}$.
We then have
\begin{prop} $\mathcal{M}^{\circ}_{\alpha}(G)$ is dense in $\mathbf{M}_{\alpha}(G)$
for the weak-* topology.
\end{prop}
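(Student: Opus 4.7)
The plan is to mimic the strategy of propositions \ref{propLadense} and \ref{propMaCdense}, but replace the approximating subspace by $\mathcal{A}(G)$. Given $\la \in \mathbf{M}_{\alpha}(G)$ and a basic weak-* open neighborhood $U = U(\la; f_1,\dots,f_n,\eps)$, I would pass to the smaller neighborhood $V = U(\la; f_1,\dots,f_n,\hat f_1,\dots,\hat f_n,\eps) \subset U$ and try to exhibit an element $\nu \in \mathcal{M}^{\circ}_{\alpha}(G)$ lying inside $V$ obtained by symmetrizing an element of $\mathcal{A}(G)$.

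The first step is to verify the hypothesis of lemma \ref{lemweak} for $E = \mathcal{A}(G) \subset \mathbf{M}_a(G)$: if $\varphi \in C(G)$ satisfies $\int f\varphi = 0$ for every $f \in \mathcal{A}(G)$, then $\varphi = 0$. This follows from the Peter-Weyl theorem, which asserts that $\mathcal{A}(G)$ is uniformly dense in $C(G)$; since $\mathcal{A}(G)$ is moreover closed under complex conjugation (the conjugate of a matrix coefficient of $u$ being a matrix coefficient of the conjugate representation $\bar u$), one may approximate $\overline{\varphi}$ in sup norm by a sequence $f_k \in \mathcal{A}(G)$ and conclude $\int |\varphi|^2 = \lim_k \int f_k \varphi = 0$, whence $\varphi = 0$.

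Applying lemma \ref{lemweak} then yields $\mu \in \mathcal{A}(G)$ with $\mu(f_i) = \la(f_i)$ and $\mu(\hat f_i) = \la(\hat f_i)$ for all $i$. Let $\nu = \frac{\mu - \mu^+}{2}$. By the preceding proposition, $\mathcal{A}(G)$ is stable under $f \mapsto f^+$, so $\nu \in \mathcal{A}(G)$; by construction $\nu^+ = -\nu$, so $\nu \in \mathcal{A}(G) \cap \mathbf{M}_{\alpha}(G) = \mathcal{M}^{\circ}_{\alpha}(G)$. Using $\la^+ = -\la$, which gives $\la(\hat f_i) = \la^+(f_i) = -\la(f_i)$, one computes
\[
\nu(f_i) \;=\; \frac{\mu(f_i) - \mu^+(f_i)}{2} \;=\; \frac{\mu(f_i) - \mu(\hat f_i)}{2} \;=\; \frac{\la(f_i) - \la(\hat f_i)}{2} \;=\; \la(f_i),
\]
so $\nu \in V \subset U$, concluding the density argument.

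The only nontrivial point is the separation statement underlying the application of lemma \ref{lemweak}, which is essentially Peter-Weyl plus stability of $\mathcal{A}(G)$ under conjugation; once it is in place, the symmetrization trick is identical to the one already carried out in propositions \ref{propLadense} and \ref{propMaCdense}.
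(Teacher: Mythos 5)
Your proof is correct, but it follows a genuinely different route from the paper's. You apply lemma \ref{lemweak} directly with $E = \mathcal{A}(G)$ (viewed inside $\mathbf{M}_a(G)$), verifying its separation hypothesis via Peter--Weyl uniform density together with stability of $\mathcal{A}(G)$ under complex conjugation, and then run the same exact finite-interpolation plus symmetrization trick as in propositions \ref{propLadense} and \ref{propMaCdense}; the stability of $\mathcal{A}(G)$ under $f \mapsto f^+$ (the preceding proposition) guarantees $\nu = \frac{\mu - \mu^+}{2} \in \mathcal{A}(G) \cap \mathbf{M}_{\alpha}(G) = \mathcal{M}^{\circ}_{\alpha}(G)$, and your computation $\nu(f_i) = \la(f_i)$, using $\la(\hat f_i) = \la^+(f_i) = -\la(f_i)$, is exactly right. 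The paper instead argues in two stages: it invokes proposition \ref{propMaCdense} to reduce to showing that $\mathcal{M}^{\circ}_{\alpha}(G)$ is weak-* dense in $\mathcal{M}^c_{\alpha}(G)$, and proves this by $L^2$-approximation --- $\mathcal{A}(G)$ is $L^2$-dense in $C(G)$, the approximant is symmetrized using the unitarity of $f \mapsto f^+$ for the hermitian scalar product (established just before the proposition), and Cauchy--Schwarz converts $L^2$-closeness into closeness on the finitely many test functions defining the weak-* neighborhood. Your version is shorter and more uniform with the earlier density arguments, needs no $L^2$ estimates, no unitarity of the involution, and yields exact agreement on the test functions rather than an $\eps$-approximation; the paper's version reuses proposition \ref{propMaCdense}, displays the chain $\mathcal{M}^{\circ}_{\alpha}(G) \subset \mathcal{M}^c_{\alpha}(G) \subset \mathbf{M}_{\alpha}(G)$ with a density statement at each step, and makes explicit the role of the $L^2$-structure and of the unitarity of $f \mapsto f^+$, which also underlies the orthogonality of the decomposition of $\mathcal{M}^{\circ}_{\alpha}(G)$. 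Both arguments ultimately rest on the Peter--Weyl density of $\mathcal{A}(G)$ in $C(G)$, used by you in the uniform norm to check the hypothesis of lemma \ref{lemweak}, and by the paper in the $L^2$ norm.
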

\begin{proof} Because of proposition \ref{propMaCdense} it is sufficient
to prove that $\mathcal{M}_{\alpha}(G)$ is dense in $\mathcal{M}_{\alpha}^c(G)$.
Recall that $\mathcal{A}(G)$ is dense in $C(G)$ for the $L^2$-topology,
that is the norm topology associated to the usual hermitian scalar
product defined above. It follows that $\mathcal{M}_{\alpha}(G) =
\mathcal{A}(G) \cap \mathbf{M}_{\alpha}$ is dense
in $\mathcal{M}_{\alpha}^c(G) = C(G) \cap \mathbf{M}_{\alpha}$
for the same topology. Indeed, for all $f \in \mathcal{M}_{\alpha}^c(G)
\subset C(G)$ and $\eps > 0$, taking $g \in \mathcal{A}(G)$ such that
$ \| f - g \|_{L^2} \leq \eps$ one gets $\| f - \tilde{g} \|_{L^2}
\leq \eps$ for $\tilde{g} = \frac{g - g^+}{2}$ by unitarity of $f \mapsto
f^+$. Now, if $\la = N_f \in U = U(\la; \varphi_1,\dots,\varphi_n , \eps)$
and $\la \in \mathcal{M}_{\alpha}^c(G)$, that is $f \in C(G)$
and $f^+ = -f$, let $m = \max \| \varphi_k \|_{L^2}$ and
$\mu = N_g \in \mathcal{M}_{\alpha}(G)$ such that $\| f - g \|_{L^2}
\leq \frac{\eps}{m}$. One gets, for all $k \in [1,n]$,
$$
| \la(\varphi_k) - \mu(\varphi_k) | = \left| \int (f-g)\varphi_k
\right|  \leqslant \| f - g \|_{L^2} \| \varphi_k \|_{L^2}
\leqslant \eps
$$
by the Cauchy-Schwartz inequality. It follows that $\mu \in U$ hence
$\mathcal{M}_{\alpha}(G)$ is dense in $\mathbf{M}_{\alpha}(G)$
for the weak-* topology.
\end{proof}

\begin{cor} If $G$ is finite, then $\mathcal{L}_{\alpha}(G) = \mathcal{M}^{\circ}_{\alpha}(G) = \mathbf{M}_{\alpha}(G)$.
\end{cor}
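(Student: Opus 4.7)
The plan is to observe that in the finite discrete case every topological nuance collapses, so the density results already proven translate into genuine equalities. Concretely, when $G$ is finite and carries the discrete topology, we have the chain of identifications
$$
C_0(G) = C(G) = L^1(G) = \mathbf{M}(G) = \mathbf{M}_d(G) = \mathbf{M}_a(G) \simeq \C G,
$$
each of these being the finite-dimensional vector space of complex functions on $G$. In particular $\mathbf{M}_{\alpha}(G)$ is a finite-dimensional subspace of $\mathbf{M}(G)$.

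First I would handle the equality $\mathcal{L}_{\alpha}(G) = \mathbf{M}_{\alpha}(G)$. Proposition \ref{propLadense} asserts that $\mathcal{L}_{\alpha}(G)$ is dense in $\mathbf{M}_{\alpha}(G)$ for the weak-* topology. Since $\mathbf{M}(G)$ is finite-dimensional and the linear forms $\mu \mapsto \mu(f)$ with $f \in C_0(G) = \C G$ obviously separate measures, the weak-* topology coincides with the unique Hausdorff vector space topology on $\mathbf{M}(G)$. In such a topology every dense linear subspace of a finite-dimensional (hence closed) subspace coincides with it, so the density forces $\mathcal{L}_{\alpha}(G) = \mathbf{M}_{\alpha}(G)$.

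Next I would treat $\mathcal{M}^{\circ}_{\alpha}(G) = \mathbf{M}_{\alpha}(G)$ by the same mechanism. For the finite group $G$ the Peter--Weyl decomposition reduces to the classical isotypic decomposition of $\C G$, and the dimension count $\sum_{u \in \widehat{G}} (\dim H_u)^2 = \#G$ shows that $\mathcal{A}(G) = C(G) = \mathbf{M}(G)$. The preceding proposition then gives $\mathcal{M}^{\circ}_{\alpha}(G) = \mathcal{A}(G) \cap \mathbf{M}_{\alpha}(G) = \mathbf{M}_{\alpha}(G)$ outright, and one does not even need to invoke density a second time. Alternatively, one may simply recall that $\mathcal{M}^{\circ}_{\alpha}(G)$ is shown to be weak-* dense in $\mathbf{M}_{\alpha}(G)$ and repeat the finite-dimensional collapse argument above.

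There is no real obstacle here: the corollary is the automatic bookkeeping consequence of having built the two approximation results for an arbitrary locally compact $G$, once specialized to a finite discrete $G$ where every topology in sight is the unique vector space topology on a finite-dimensional space. The only thing worth being careful about is confirming that the weak-* topology on the finite-dimensional $\mathbf{M}(G)$ is Hausdorff, which follows from the fact that $C_0(G)$ itself separates points of $\mathbf{M}(G)$ since $\delta_g(\mathbf{1}_{\{g\}}) = 1$.
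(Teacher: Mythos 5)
Your argument is correct and is essentially the paper's own (implicit) proof: the corollary is read off from the weak-* density propositions together with the collapse, spelled out in the subsection on finite $G$, of all the measure spaces onto the finite-dimensional $\C G$, where a dense linear subspace of a (closed, finite-dimensional) subspace must equal it. Your side remark that $\mathcal{A}(G)=C(G)=\mathbf{M}(G)$ makes the equality $\mathcal{M}^{\circ}_{\alpha}(G)=\mathbf{M}_{\alpha}(G)$ immediate without a second density argument is a harmless shortcut in the same spirit as the paper's final identification of $\mathcal{M}^{\circ}_{\alpha}(G)$ with $\mathcal{M}_{\alpha,\tau}(G)$.
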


Finally, we identify $\mathcal{M}^{\circ}_{\alpha}(G)$ with $\mathcal{M}_{\alpha,\tau}(G)$
when $G$ is finite.  Recall that $\mathcal{A}(G) = C(G)$ is the dual of the group
algebra $\C G = \bigoplus_{u \in \widehat{G}} \gl(u)$,
and is identified to it by the Haar measure and the biduality between
elements on $\C G$ and measures on $G$.
This classically identifies $\mathcal{A}_{u^*}(G)$
with $\gl(u)$ (see e.g. \cite{HR2} \S 27, notably 27.49 (b)).
As a consequence this also identifies $\mathcal{A}_{u^*}(G)^+$ with $\gl((u^* \otimes \alpha)\circ \tau)$
hence $\gl'_{\alpha}(\tilde{u}) = \gl_{\alpha}(u)$. Finally, $u^* \simeq (\alpha \otimes u^*)^* \circ \tau
\Leftrightarrow u \circ \tau \simeq \alpha \otimes u^*$ and $\osp_{\alpha}(u) = \osp_{\alpha}'(u)$,
which shows that $\mathcal{M}_{\alpha}^{\circ}(G) = \mathcal{M}_{\alpha,\tau}(G)$.
This provides the second proof of the theorem :
\begin{cor} \label{corharm} If $G$ is finite, then $\mathcal{L}_{\alpha,\tau}(G) = \mathcal{M}_{\alpha,\tau}(G)$.
\end{cor}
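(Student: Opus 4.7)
The plan is to assemble the corollary directly from the density results of the harmonic analysis section together with the identification of $\mathcal{M}_\alpha^{\circ}(G)$ with $\mathcal{M}_{\alpha,\tau}(G)$ in the finite case. Since $G$ is finite with the discrete topology, $\mathbf{M}(G)=\mathbf{M}_d(G)=\mathbf{M}_a(G)=L^1(G)=\C G$ as we noted earlier, so all the spaces $\mathcal{L}_\alpha(G)$, $\mathcal{M}_\alpha^{\circ}(G)$ and $\mathbf{M}_\alpha(G)$ live inside one finite-dimensional vector space. Propositions \ref{propLadense} and its analogue for $\mathcal{M}_\alpha^{\circ}(G)$ show that $\mathcal{L}_\alpha(G)$ and $\mathcal{M}_\alpha^{\circ}(G)$ are both weak-* dense in $\mathbf{M}_\alpha(G)$. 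But in a finite-dimensional space the weak-* topology is Hausdorff and all linear subspaces are closed, so density forces equality: $\mathcal{L}_\alpha(G)=\mathcal{M}_\alpha^{\circ}(G)=\mathbf{M}_\alpha(G)$, which is exactly the corollary preceding the statement.

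It remains to verify that $\mathcal{M}_\alpha^{\circ}(G)$ coincides with the algebra $\mathcal{M}_{\alpha,\tau}(G)$ defined in Section 1 via the decomposition $\k G=\bigoplus_{V\in\widehat G}\gl(V)$. I would use the standard Peter--Weyl-type identification, valid because $G$ is finite: the Haar measure of total mass 1 and biduality give an algebra isomorphism $\mathcal{A}_{u^*}(G)\simeq \gl(u)$ sending a matrix coefficient $a^{u^*}_{v,w}$ to the corresponding rank-one endomorphism of the representation space of $u$. Under this identification the involution $f\mapsto f^+$, which on $\mathcal{A}_{u^*}(G)$ sends it to $\mathcal{A}_{(u^*\otimes\alpha)\circ\tau}(G)\cong \gl((u^*\otimes\alpha)\circ\tau)$, corresponds to the antiautomorphism $g\mapsto \alpha(g)\tau(g)^{-1}$ of $\C G$. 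This comes from a direct computation using the formula $(\alpha a^u_{v,w})^{\bigstar}=a^{(\alpha u)^*}_{w^*,v^*}$ established in the previous proposition.

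Once this correspondence is in place, the piecewise definition matches up tautologically: for $\tilde u=\{u\}\in\widehat G_\alpha^{even}$, the subspace $\osp'_\alpha(\tilde u)=\{f\in \mathcal{A}_{u^*}(G)\mid f^+=-f\}$ is carried exactly to the fixed points of $x\mapsto -\alpha(x)\tau(x)^{-1}$ inside $\gl(u)$, which is by definition $\osp_{\alpha,\tau}(u)$; and for $\tilde u=\{u,(u^*\otimes\alpha)\circ\tau\}\in\widehat G_\alpha^{odd}$ the subspace $\gl'_\alpha(\tilde u)\subset \mathcal{A}_{u^*}(G)\oplus\mathcal{A}_{u^*}(G)^+$ is carried to $\gl_{\alpha,\tau}(u)\subset\gl(u)\oplus\gl((u^*\otimes\alpha)\circ\tau)$. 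Summing these identifications over $\widehat G_\alpha$ yields $\mathcal{M}_\alpha^{\circ}(G)=\mathcal{M}_{\alpha,\tau}(G)$, and combined with the preceding paragraph we conclude $\mathcal{L}_{\alpha,\tau}(G)=\mathcal{M}_{\alpha,\tau}(G)$.

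The main obstacle I expect is the bookkeeping in the second paragraph: one has to check that the various stars, bars, and the automorphism $\tau$ intertwine correctly between the measure-theoretic side (where the involution is implemented via $\varphi\mapsto\alpha(\varphi^{\bigstar}\circ\tau)$ on $C(G)$) and the $\gl(V)$ side (where the corresponding map is the transpose-plus-twist appearing in the definition of $\gl_{\alpha,\tau}$ and $\osp_{\alpha,\tau}$). Once this dictionary is set up carefully on a single matrix coefficient, everything else is formal, and no further counting or character-theoretic input is needed—this is precisely what makes the second proof ``conceptual'' as advertised.
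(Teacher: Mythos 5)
Your proposal is correct and takes essentially the same route as the paper: weak-* density of $\mathcal{L}_{\alpha}(G)$ and $\mathcal{M}^{\circ}_{\alpha}(G)$ in $\mathbf{M}_{\alpha}(G)$, which in the finite (hence finite-dimensional) case forces $\mathcal{L}_{\alpha}(G) = \mathcal{M}^{\circ}_{\alpha}(G) = \mathbf{M}_{\alpha}(G)$, followed by the classical identification $\mathcal{A}_{u^*}(G) \simeq \gl(u)$ under which $f \mapsto f^+$ corresponds to $g \mapsto \alpha(g)\tau(g)^{-1}$, giving $\mathcal{M}^{\circ}_{\alpha}(G) = \mathcal{M}_{\alpha,\tau}(G)$. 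This matches the paper's own argument step for step, including the final matching of $\osp'_{\alpha}$ and $\gl'_{\alpha}$ with their counterparts in the decomposition of $\C G$.
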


\end{document}